\newcommand{\Z}{{\textsf{\textup{Z}}}}
\newtheorem{thm}{Theorem}
\newtheorem{cor}[thm]{Corollary}
\newtheorem{defi}[thm]{Definition}
\newtheorem{nota}[thm]{Notation}
\newtheorem{princ}[thm]{Principle}
\newtheorem{ack}[thm]{Acknowledgement}
\newtheorem*{tempo*}{Template}
\newcommand\be{\begin{equation}}
\newcommand\ee{\end{equation}} 
\def\bdefi{\begin{defi}}
\def\edefi{\end{defi}}
\def\bnota{\begin{nota}\rm}
\def\enota{\end{nota}}
\def\FIVE{\Pi_{1}^{1}\text{-\textup{\textsf{CA}}}_{0}}
\def\SIX{\Pi_{2}^{1}\text{-\textsf{\textup{CA}}}_{0}}
\def\ATR{\textup{\textsf{ATR}}}
\def\ZF{\textup{\textsf{ZF}}}
\def\osc{\textup{\textsf{osc}}}
\def\RCA{\textup{\textsf{RCA}}}
\def\({\textup{(}}
\def\){\textup{)}}
\def\RCAo{\textup{\textsf{RCA}}_{0}^{\omega}}
\def\ACAo{\textup{\textsf{ACA}}_{0}^{\omega}}
\def\WKL{\textup{\textsf{WKL}}}
\def\bye{\end{document}}
\def\N{{\mathbb  N}}
\def\Q{{\mathbb  Q}}
\def\R{{\mathbb  R}}
\def\SS{\textup{\textsf{S}}}
\def\di{\rightarrow}
\def\asa{\leftrightarrow}
\def\ACA{\textup{\textsf{ACA}}}
\def\QFAC{\textup{\textsf{QF-AC}}}
\def\AC{\textup{\textsf{AC}}}
\def\osc{\textup{\textsf{osc}}}
\def\FC{\textup{\textsf{Finite Choice}}}
\def\Hyp{\textup{\textsf{HYP}}}
\def\ABW{\textup{\textsf{ABW}}}
\def\CLC{\textup{\textsf{ClC}}}
\def\WSAC{\textup{\textsf{weak-$\Sigma_{1}^{1}$-AC$_{0}$}}}
\def\USAC{\textup{\textsf{weak-$\Sigma_{1}^{1}$-AC$_{0}$}}}
\def\SAC{\textup{\textsf{$\Sigma_{1}^{1}$-AC$_{0}$}}}
\def\FSAC{\textup{\textsf{finite-$\Sigma_{1}^{1}$-AC$_{0}$}}}
\def\WFSAC{\textup{\textsf{height-$\Sigma_{1}^{1}$-AC$_{0}$}}}
\def\cocode{\textup{\textsf{cocode}}}
\def\IND{\textup{\textsf{IND}}}
\def\fin{\textup{\textsf{fin}}}
\def\eps{\varepsilon}
\def\HYP{\textup{\textsf{HYP}}}
\numberwithin{equation}{section}
\numberwithin{thm}{section}
\begin{document}
\title{Connecting real and hyperarithmetical analysis}
\author{Sam Sanders}
\address{Department of Philosophy II, RUB Bochum, Germany}
\email{sasander@me.com}
\keywords{Higher-order arithmetic, hyperarithmetical analysis}
\subjclass[2010]{03B30, 03F35}
\begin{abstract}
Going back to Kreisel in the Sixties, \emph{hyperarithmetical analysis} is a cluster of logical systems just beyond arithmetical comprehension.  Only recently natural examples of theorems from the mathematical mainstream were identified that fit this category.
In this paper, we provide \emph{many} examples of theorems of real analysis that \emph{sit within the range of hyperarithmetical analysis}, namely between the higher-order version of 
$\Sigma_{1}^{1}$-$\AC_{0}$ and $\textsf{weak}$-$\Sigma_{1}^{1}$-$\AC_{0}$, working in Kohlenbach's higher-order framework.  Our example theorems are based on the \emph{Jordan decomposition theorem}, \emph{unordered sums}, \emph{metric spaces}, and \emph{semi-continuous functions}.   Along the way, we identify a couple of new systems of hyperarithmetical analysis.  
\end{abstract}


\maketitle
\thispagestyle{empty}

\section{Introduction}\label{intro}
\subsection{Motivation and overview}
The aim of this paper is to exhibit many natural examples of theorems from real analysis that exist \textbf{in the range of hyperarithmetical analysis}.  
The exact meaning of `hyperarithmetical analysis' and the previous boldface text is discussed in Section~\ref{hyper}, but intuitively speaking the latter amounts to being sandwiched between known systems of hyperarithmetical analysis or their higher-order extensions.  We shall work in Kohlenbach's framework from \cite{kohlenbach2}, with which we assume basic familiarity.  

\smallskip

We introduce some necessary definitions and axioms in Section \ref{prelim}.  We shall establish that the following inhabit the range of hyperarithmetical analysis. 
\begin{itemize}
\item Basic properties of (Lipschitz) continuous functions on compact metric spaces \emph{without second-order representation/separability conditions}, including  the generalised intermediate value theorem  (Section \ref{disorder}).
\item Properties of functions of \emph{bounded variation}, including the \emph{Jordan decomposition theorem}, where the total variation is given (Section~\ref{BVS}).  
\item Properties of semi-continuous functions and closed sets (Section~\ref{evelinenef}).
\item Convergence properties of \emph{unordered sums} (Section \ref{unorder}).  
\end{itemize}
These results still go through if we restrict to arithmetically defined objects by Theorem \ref{slonk}. 
To pinpoint the exact location of the aforementioned principles, we introduce a new `finite choice' principle based on $\FSAC$ from \cite{gohzeg} (see Section~\ref{prelim}), 
using Borel's notion of \emph{height function} (\cite{opborrelen4, opborrelen5}). 

\smallskip

Finally, as to conceptual motivation, the historical examples of systems of hyperarithmetical analysis are rather logical in nature and natural examples from the mathematical mainstream are a relatively recent discovery, as discussed in Section~\ref{hyper}. Our motivation is to show that third-order
arithmetic exhibits \emph{many} robust examples of theorems in the range of hyperarithmetical analysis, similar perhaps to how so-called splittings and disjunctions are much more plentiful in third-order arithmetic, as explored in  \cite{samsplit}. 
In this paper, we merely develop certain examples and indicate the many possible variations.  

\subsection{Preliminaries}\label{prelim}
We introduce some basic definitions and axioms necessary for this paper. 
We note that subsets of $\R$ are given by their characteristic functions as in Definition \ref{char}, well-known from measure and probability theory.
We shall generally work over $\ACAo$ -defined right below- as some definitions make little sense over the base theory $\RCAo$.  We refer to \cite{kohlenbach2} for the latter.

\smallskip

First of all, full second-order arithmetic $\Z_{2}$ is the `upper limit' of second-order RM.  The systems $\Z_{2}^{\omega}$ and $\Z_{2}^{\Omega}$ are conservative extensions of $\Z_{2}$ by \cite{hunterphd}*{Cor.\ 2.6}. 
The system $\Z_{2}^{\Omega}$ is $\RCAo$ plus Kleene's quantifier $(\exists^{3})$ (see e.g.\ \cite{dagsamXIV} or \cite{hunterphd}), while $\Z_{2}^{\omega}$ is $\RCAo$ plus $(\SS_{k}^{2})$ for every $k\geq 1$; the latter axiom states the existence of a functional $\SS_{k}^{2}$ deciding $\Pi_{k}^{1}$-formulas in Kleene normal form.  
The system $\FIVE^{\omega}\equiv \RCAo+(\SS_{1}^{2})$ is a $\Pi_{3}^{1}$-conservative extension of $\FIVE$ (\cite{yamayamaharehare}), where $\SS_{1}^{2}$ is also called the \emph{Suslin functional}. 
We also write $\ACAo$ for $\RCAo+(\exists^{2})$ where the latter is as follows 
\be\label{muk}\tag{$\exists^{2}$}
(\exists E:\N^{\N}\di \{0,1\})(\forall f \in \N^{\N})\big[(\exists n\in \N)(f(n)=0) \asa E(f)=0    \big].
\ee
The system $\ACAo$ is a conservative extension of $\ACA_{0}$ by \cite{hunterphd}*{Theorem 2.5}.
Over $\RCAo$, $(\exists^{2})$ is equivalent to $(\mu^{2})$, where the latter expresses the existence of Feferman's $\mu$ (see \cite{kohlenbach2}*{Prop.\ 3.9}), defined as follows for all $f\in \N^{\N}$:
\[
\mu(f):= 
\begin{cases}
n & \textup{if $n$ is the least natural number such that $f(n)=0$ }\\
0 & \textup{if $f(n)>0$ for all $n\in \N$}
\end{cases}
.
\]
The following schema is essential to our enterprise, as discussed in Section \ref{hyper}.  
\begin{princ}[$\QFAC^{0,1}$]
For any $ Y:\N^{\N}\di\N$, if $(\forall n\in \N)(\exists f\in \N^{\N})(Y(f, n)=0)$, then there exists a sequence $  (f_{n})_{n\in \N}$ in $\N^{\N}$ with $(\forall n\in \N)(Y(f_{n}, n)=0)$.
\end{princ}
The local equivalence between sequential and `epsilon-delta' continuity cannot be proved in $\ZF$, but can be established in $\RCAo+\QFAC^{0,1}$ (see \cite{kohlenbach2}).
Thus, it should not be a surprise that the latter system is often used as a base theory too.

\smallskip

Secondly, we make use the following standard definitions concerning sets.  
\bdefi[Sets]\label{char}~
\begin{itemize}
\item A subset $A\subset \R$ is given by its characteristic function $F_{A}:\R\di \{0,1\}$, i.e.\ we write $x\in A$ for $ F_{A}(x)=1$, for any $x\in \R$.
\item A set $A\subset \R$ is \emph{enumerable} if there is a sequence of reals that includes all elements of $A$.
\item A set $A\subset \R$ is \emph{countable} if there is $Y: \R\di \N$ that is injective on $A$, i.e.\
\[
(\forall x, y\in A)( Y(x)=_{0}Y(y)\di x=_{\R}y).  
\]
\item A set $A\subset \R$ is \emph{strongly countable} if there is $Y: \R\di \N$ that is injective and surjective on $A$; the latter means that $(\forall n\in \N)(\exists x\in A)(Y(x)=n)$.
\item A set $A\subset \R$ is \emph{finite} in case there is $N\in \N$ such that for any finite sequence $(x_{0}, \dots, x_{N})$, there is $i\leq N$ with $x_{i}\not \in A$.  
We sometimes write `$|A|\leq N$'.
\end{itemize}
\edefi
Thirdly, we list the following second-order system needed below.  
\begin{princ}[$\FSAC$, \cite{gohzeg}]
The system $\RCA_{0}$ plus for any arithmetical $\varphi$:
\[
(\forall n\in \N)(\exists \textup{ nonzero finitely many } X\subset \N )\varphi(n, X ) \di (\exists (X_{n})_{n\in \N} )(\forall n\in \N)\varphi(n, X_{n} ), 
\]
where `$(\exists \textup{ nonzero finitely many } X\subset \N )\varphi(n, X )$' means that there is a non-empty sequence $(X_{0}, \dots, X_{k})$ such that for any $X\subset \N$,
$\varphi(n, X)\asa (\exists i\leq k)(X_{i}=X)$.
\end{princ}
We let $\WFSAC$  be $\FSAC$ where we additionally assume $g\in \N^{\N}$ to be given such that for all $n$, $g(n)\geq k+1$ where $k+1$ is the length of the sequence $(X_{0}, \dots, X_{k})$ in the formula `$(\exists \textup{ nonzero finitely many } X\subset \N )\varphi(n, X )$'.  
We have the following straightforward connections:
\[
\SAC\di \FSAC\di \WFSAC\di  \WSAC,  
\]
i.e.\ $\WFSAC$ is also a system of hyperarithmetical analysis by Section \ref{hyper}.
In the grand scheme of things, $g$ is a \emph{height function}, a notion that goes back to Borel (\cites{opborrelen3, opborrelen5}) and is studied in RM in \cite{samBIG, samBIG3}. 

\subsection{On hyperarithmetical analysis}\label{hyper}
Going back to Kreisel (\cite{kreide}), the notion of \emph{hyperarithmetical set} (see e.g.\ \cite{simpson2}*{VIII.3}) gives rise to the second-order definition of \emph{theory/theorem of hyperarithmetical analyis} (THA for brevity, see e.g.\ \cite{skore3}).  In this section, we recall known results regarding THAs, including the exact (rather technical) definition, for completeness.

\smallskip

First of all, well-known THAs are $\Sigma_{1}^{1}$-$\textsf{CA}_{0}$ and $\textsf{weak}$-$\Sigma_{1}^{1}$-$\textsf{CA}_{0}$  (see \cite{simpson2}*{VII.6.1 and VIII.4.12}), where the latter is the former with the antecedent restricted to unique existence.  Any system \emph{between} two THAs is \emph{also} a THA, which is a convenient way of establishing that a given system is a THA.  

\smallskip

Secondly, at the higher-order level, $\ACAo+\QFAC^{0,1}$ from Section \ref{prelim} is a conservative extension of $\Sigma_{1}^{1}$-$\textsf{CA}_{0}$ by \cite{hunterphd}*{Cor.\ 2.7}.  
This is established by extending any model $\mathcal{M}$ of $\SAC$ to a model $\mathcal{N}$ of $\ACAo+\QFAC^{0,1}$, where the second-order part of $\mathcal{N}$ is isomorphic to $\mathcal{M}$.  
In this paper, we study (higher-order) systems that imply $\textsf{weak}$-$\Sigma_{1}^{1}$-$\textsf{CA}_{0}$ and are implied by $\ACAo+\QFAC^{0,1}$.  In light of the aforementioned conservation result, it is reasonable to refer to such intermediate third-order systems as \emph{existing in the range of hyperarithmetical analysis}.  

\smallskip

Thirdly, finding a \emph{natural} THA, i.e.\ hailing from the mathematical mainstream, is surprisingly hard. 
Montalb\'an's \textsf{INDEC} from \cite{monta2}, a special case of Jullien's \cite{juleke}*{IV.3.3}, is generally considered to be the first such statement.  The latter theorem by Jullien can be found in \cite{aardbei}*{6.3.4.(3)} and \cite{roosje}*{Lemma~10.3}.  
The monographs \cites{roosje, aardbei, juleke} are all `rather logical' in nature and $\textsf{INDEC}$ is the \emph{restriction} of a higher-order statement to countable linear orders in the sense of RM (\cite{simpson2}*{V.1.1}), i.e.\ such orders are given by sequences.  In \cite{dagsamXI}*{Remark~2.8} and \cite{samcount}*{Remark 7 and \S3.4}, a number of third-order statements are identified, including the Bolzano-Weierstrass theorem and K\"onig's infinity lemma, that are in the range of hyperarithmetical analysis.  Shore and others have studied a considerable number of THAs from graph theory \cites{skore1, skore2,gohzeg}.  A related concept is that of \emph{almost theorem/theory of hyperarithmetical analysis} (ATHA for brevity, \cite{skore3}), which is weaker than $\ACA_{0}$ but becomes a THA when combined with the latter.   

\smallskip

Finally, we consider the official definition of THA from \cite{monta2} based on $\omega$-models.
\bdefi\label{flagon}
A system $T$ of axioms of second-order arithmetic is a \emph{theory/theorem of hyperarithmetical analysis} in case
\begin{itemize}
\item  $T$ holds in $\HYP(Y)$ for every $Y \subset \omega$, where $\Hyp(Y)$ is the $\omega$-model consisting of all sets hyperarithmetic in $Y$,
\item all $\omega$-models of $T$ are hyperarithmetically closed. 
\end{itemize}
\edefi
Here, an $\omega$-model is \emph{hyperarithmetically closed} if it is closed under disjoint union and for every set $X, Y \subset \omega$, if $X$ is hyperarithmetically reducible to $Y$ and $Y$ is in the model, then $X$ is in the model too.
In turn, this notion of reducibility means that $n\in X$ can be expressed by a $\Delta_{1}^{1}$-formula with $Y$ as a parameter; we refer to \cite{monta2}*{Theorem 1.14} for equivalent formulations.

\section{Metric spaces}\label{disorder}
We introduce the well-known definition of metric space $(M, d)$ to be used in this paper (Section \ref{bd}), where we always assume $M$ to be a subset of $\R$, up to coding of finite sequences.  
We show that basic properties of (Lipschitz) continuous functions on such metric spaces exist in the range of hyperarithmetical analysis (Section~\ref{zag}), 
even if we restrict to arithmetically defined objects (Theorem \ref{slonk}). 
We have previously studied metric spaces in \cite{sammetric}; to our own surprise, some of these results have nice generalisations relevant to the study of hyperarithmetical analysis.  
\subsection{Basic definitions}\label{bd}
We shall study metric spaces $(M, d)$ as in Definition~\ref{donkc}.  We assume that $M$ comes with its own equivalence relation `$=_{M}$' and that the metric $d$ satisfies 
the axiom of extensionality relative to `$=_{M}$' as follows:
\[
(\forall x, y, v, w\in M)\big([x=_{M}y\wedge v=_{M}w]\di d(x, v)=_{\R}d(y, w)\big).
\]
Similarly to functions on the reals, `$F:M\di \R$' denotes a function from $M$ to the reals that satisfies the following instance of the axiom of function extensionality:
\be\tag{\textup{\textsf{E}}$_{M}$}\label{koooooo}
(\forall x, y\in M)(x=_{M}y\di F(x)=_{\R}F(y)).
\ee
We recall that the study of metric space in second-order RM is at its core based on equivalence relations, as discussed explicitly in e.g.\ \cite{simpson2}*{I.4} or \cite{damurm}*{\S10.1}.
\bdefi\label{donkc}
A functional $d: M^{2}\di \R$ is a \emph{metric on $M$} if it satisfies the following properties for $x, y, z\in M$:
\begin{enumerate}
 \renewcommand{\theenumi}{\alph{enumi}}
\item $d(x, y)=_{\R}0 \asa  x=_{M}y$,
\item $0\leq_{\R} d(x, y)=_{\R}d(y, x), $
\item $d(x, y)\leq_{\R} d(x, z)+ d(z, y)$.
\end{enumerate}
\edefi
We shall only study metric spaces $(M, d)$ with $M\subset \N^{\N}$ or $M\subset \R$. 
To be absolutely clear, quantifying over $M$ amounts to quantifying over $\N^{\N}$ or $\R$, perhaps modulo coding of finite sequences, i.e.\ the previous definition can be made in third-order arithmetic for the intents and purposes of this paper.   Since we shall study compact metric spaces, this restriction is minimal in light of \cite{buko}*{Theorem 3.13}.

\smallskip

Sub-sets of $M$ are defined via characteristic functions, like for the reals in Definition \ref{char}, keeping in mind \eqref{koooooo}.  
In particular, we use standard notation like $B_{d}^{M}(x, r)$ to denote the open ball $\{y\in M: d(x, y)<_{\R}r\}$.  

\smallskip

Secondly, the following definitions are now standard, where we note that a different nomenclature is sometimes used in second-order RM.
A sequence $(w_{n})_{n\in \N}$ in $(M, d)$ is \emph{Cauchy} if $(\forall k\in \N)(\exists N\in \N)(\forall m, n\geq N)( d(w_{n}, w_{m})<\frac{1}{2^{k}})$
\bdefi[Compactness and around]\label{deaco} 
For a metric space $(M, d)$, we say that
\begin{itemize}
\item $(M, d)$ is \emph{weakly countably-compact} if for any $(a_{n})_{n\in \N}$ in $M$ and sequence of rationals $(r_{n})_{n\in \N}$ such that we have $M\subset \cup_{n\in \N}B^{M}_{d}(a_{n}, r_{n})$, there is $m\in \N$ such that  $M\subset \cup_{n\leq m}B^{M}_{d}(a_{n}, r_{n})$,
\item $(M, d)$ is \emph{countably-compact} if for any sequence $(O_{n})_{n\in \N}$ of open sets in $M$ such that $M\subset \cup_{n\in \N}O_{n}$, there is $m\in \N$ such that  $M\subset \cup_{n\leq m}O_{n}$,
\item $(M, d)$ is \emph{compact} in case for any $\Psi:M\di \R^{+}$, there are $x_{0}, \dots, x_{k}\in M$ such that $\cup_{i\leq k}B_{d}^{M}(x_{i}, \Psi(x_{i}))$ covers $M$, 
\item $(M, d)$ is \emph{sequentially compact} if any sequence has a convergent sub-sequence,
\item $(M, d)$ is \emph{limit point compact} if any infinite set in $M$ has a limit point,
\item $(M, d)$ is \emph{complete} in case every Cauchy sequence converges,
\item $(M, d)$ is \emph{totally bounded} if for all $k\in \N$, there are $w_{0}, \dots, w_{m}\in M$ such that $\cup_{i\leq m}B_{d}^{M}(w_{i}, \frac{1}{2^{k}})$ covers $M$.
\item a function $f:M\di \R$ is \emph{topologically continuous} if for any open $V\subset \R$, the set $f^{-1}(V)=\{x\in M: f(x)\in V\}$ is also open. 
\item a function $f:M\di \R$ is \emph{closed} if for any closed $C\subset M$, we have that $f(C)$ is also closed. \(\cites{munkies, ooskelly, leelee, searinghot}\).  
\end{itemize}
\edefi
\noindent
Regarding the final item, the set $f(C)$ does not necessarily exist in $\ACAo$, but `$f(C)$ is closed' makes sense\footnote{In particular, `$y\in f(C)$' means `$(\exists x\in C)(f(x)=y)$' and `$f(C)$ is closed' means `$(\forall y\not\in f(C))( \exists N\in \N )(\forall z\in B(z, \frac{1}{2^{N}}))( z\not\in f(C) )$', as expected.} as shorthand for the associated well-known definition.
We could study other notions, e.g.\ the Lindel\"of property or compactness based on nets, but have opted to stick to basic constructs already studied in second-order RM. 

\smallskip

Finally, fragments of the induction axiom are sometimes used, even in an essential way, in second-order RM (see e.g.\ \cite{neeman,skore3}).  
The equivalence between induction and bounded comprehension is also well-known in second-order RM (\cite{simpson2}*{X.4.4}).
We shall need a little bit of the induction axiom as follows.
\begin{princ}[$\IND_{2}$]\label{IX}
Let $Y^{2}$ satisfy $(\forall n \in \N) (\exists f \in 2^{\N})[Y(n,f)=0]$.  Then $ (\forall n\in \N)(\exists w^{1^{*}})\big[ |w|=n\wedge  (\forall i < n)(Y(i,w(i))=0)\big]$.  
\end{princ}
We let $\IND_{0}$ and $\IND_{1}$ be $\IND_{2}$ with `$(\exists f \in 2^{\N})$' restricted to respectively `$(\exists \textup{ at most one } f \in 2^{\N})$' and `$(\exists! f \in 2^{\N})$'.
We have previously used $\IND_{i}$ for $i=0, 1, 2 $ in the RM of the Jordan decomposition theorem (\cite{dagsamXI}).
By the proof of \cite{dagsamXI}*{Theorem 2.16}, $\Z_{2}^{\omega}+\IND_{2}$ cannot prove the uncountability of the reals formulated as: \emph{the unit interval is not strongly countable}.  

\subsection{Metric spaces and hyperarithmetical analysis}\label{zag}
\subsubsection{Introduction}
In this section, we identify a number of the basic properties of metric spaces in the range of hyperarithmetical analysis, as listed on the next page.  
The Axiom of Choice for finite sets as in Principle \ref{FCC} naturally comes to the fore. Clearly, the principle $\FC$ implies $\FSAC$ over $\ACAo$.
\begin{princ}[$\FC$]\label{FCC}
Let $(X_{n})_{n\in \N}$ be a sequence of non-empty finite sets in $[0,1]$.  Then there is $(x_{n})_{n\in \N}$ such that $x_n\in X_{n}$ for all $n\in \N$.
\end{princ}
\noindent
In more detail, we will establish that the following theorems are intermediate between $\ACAo+\QFAC^{0,1}$ and $\ACAo+\FC$. 
\begin{itemize}
\item Basic properties of continuous functions on \emph{sequentially} compact metric spaces (Section \ref{gawu}).
\item Basic properties of \emph{sequentially} continuous functions on (countably) compact metric spaces (Section \ref{gawu2}).
\item Restrictions of the previous results to arithmetically defined or \emph{Lipschitz} continuous functions (Section \ref{gawu3}).
\item Basic properties of \emph{connected} metric spaces, including the generalisation of the  \emph{intermediate value theorem} (Section \ref{gawu4}).
\end{itemize}
We sometimes obtain elegant equivalences, like for the intermediate value theorem (Theorem \ref{preppiu}).  We believe there is no `universal' approach 
to the previous results: each section is based on a very particular kind of metric space.  

\subsubsection{Sequentially compact spaces}\label{gawu}
In this section, we establish that basic properties of sequentially compact spaces inhabit the range of hyperarithmetical analysis. 
The following theorem is our first result, to be refined below. 
\begin{thm}[$\ACAo+\IND_{2}$]\label{klonz}
The principle $\FC$ follows from any of the items \eqref{itemaa}-\eqref{AA} where $(M, d)$ is any metric space with $M\subset \R$; the principle $\QFAC^{0,1}$ implies items \eqref{itemaa}-\eqref{wikked}.
\begin{enumerate}
\renewcommand{\theenumi}{\alph{enumi}}
\item For sequentially compact $(M, d)$, any continuous $f:M\di \R$ is bounded.\label{itemaa}
\item The previous item with `is bounded' replaced by `is uniformly continuous'.  
\label{itemgas}
\item For sequentially compact $(M, d)$ and continuous $f:M\di \R$ with $\inf_{x\in M}f(x)=y\in \R$ given, there is $x\in M$ with $f(x)=y$.\label{itemk}
\item \(Dini\).  Let $(M, d)$ be sequentially compact and let $f_{n}: (M\times \N)\di \R$ be a monotone sequence of continuous functions converging to continuous $f:M\di \R$.  
Then the convergence is uniform.   \label{diniitem}
\item For a sequentially compact metric space $(M, d)$, equicontinuity implies uniform equicontinuity \(\cite{magnus}*{Prop.\ 4.25}\).  \label{equi}
\item For a sequentially compact metric space $(M, d)$ with $M\subset [0,1]$ infinite, there is a discontinuous function $f:M\di \R$.\label{discu}
\item \(Closed map lemma, \cites{munkies, leelee, kura,mannetti}\) For a sequentially compact metric space $(M, d)$ any continuous function $f:M\di \R$ is closed.  \label{cml}
\item For sequentially compact $(M, d)$ and disjoint closed $C, D\subset M$, $d(C, D)>0$.\label{dizzy}
\item \(weak Cantor intersection theorem\) For a sequentially compact metric space $(M, d)$ and a sequence of closed sets with $M \supseteq C_{n}\supseteq C_{n+1}\ne\emptyset$, such that $\lim_{n\di \infty}\textsf{\textup{diam}}(C_{n})=0$, there is a unique $w\in \cap_{n\in \N}C_{n}$.\label{wikked} 
\item \(Ascoli-Arzel\`a\)  For sequentially compact $(M, d)$, a uniformly bounded equi-continuous sequence of functions on $M$ has a uniformly
convergent sub-sequence.\label{AA}
\end{enumerate}
The theorem still goes through if we require a modulus of continuity in item \eqref{itemaa} or if we replace `continuity' by `topological continuity' in 
items \eqref{itemaa}-\eqref{discu}.  
\end{thm}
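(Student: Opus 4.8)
The plan is to prove the two implications separately: first that $\QFAC^{0,1}$ proves items \eqref{itemaa}--\eqref{wikked}, and second that each of \eqref{itemaa}--\eqref{AA} yields $\FC$. Throughout one works over $\ACAo+\IND_{2}$, using $(\exists^{2})$ (equivalently Feferman's $\mu$) freely to decide arithmetical matters and to evaluate the characteristic functions of the sets and balls involved.

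For the first and easier direction, I would rerun the textbook proof of each item and observe that its only non-arithmetical ingredient is a single appeal to countable choice, which is exactly what $\QFAC^{0,1}$ supplies. Sequential compactness is a hypothesis in each case, so the pattern is uniform: to obtain boundedness \eqref{itemaa}, uniform continuity \eqref{itemgas}, the attained infimum \eqref{itemk}, Dini's theorem \eqref{diniitem}, uniform equicontinuity \eqref{equi}, a discontinuity \eqref{discu}, closedness of the image \eqref{cml}, positivity of $d(C,D)$ \eqref{dizzy}, or the Cantor point \eqref{wikked}, one assumes the conclusion fails, uses $\QFAC^{0,1}$ to select a witnessing sequence (a blow-up sequence, a minimising sequence, a pair of sequences with $d(x_{i},y_{i})\di 0$, a sequence leaving every uniform neighbourhood, and so on), extracts a convergent subsequence by sequential compactness, and derives the contradiction from continuity. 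The step ``$Y(f,n)=0$ is solvable for each $n$, hence there is a sequence of solutions'' is literally $\QFAC^{0,1}$, while everything else is arithmetical. I expect Ascoli--Arzel\`a \eqref{AA} to be absent from this list precisely because its diagonal argument needs a choice at \emph{each} level and so outstrips $\QFAC^{0,1}$.

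The harder direction, and the main obstacle, is deriving $\FC$ from each item. Here I would fix a sequence $(X_{n})_{n\in\N}$ of non-empty finite subsets of $[0,1]$ and build from it a sequentially compact space $(M,d)$ with $M\subset\R$; the guiding point is that although $(\exists^{2})$ cannot search the uncountable set $[0,1]$ for an element of a given $X_{n}$, the witness handed back by each item consists of \emph{actual points of $M$}, which decode to honest elements of the $X_{n}$. Concretely I would place disjoint rescaled copies of the $X_{n}$ in shrinking intervals accumulating at a single limit point (included in $M$), so that membership in $M$ is arithmetical, hence $M$ exists, and sequential compactness is provable in $\ACAo+\IND_{2}$: using $(\exists^{2})$ to decide which case holds, an arbitrary sequence either meets unboundedly many copies, giving a subsequence converging to the limit point, or is confined to finitely many copies, where the size bounds on the $X_{n}$ together with $\IND_{2}$ permit a bounded search for a constant subsequence. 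To obtain a \emph{full} choice sequence rather than choices at only finitely or countably many indices, I anticipate needing, for the convergence- and intersection-type items \eqref{itemk}, \eqref{dizzy}, \eqref{wikked}, a coding space whose points are reals coding the branches of the finitely-branching tree of finite partial choices, so that the limit/unique point provided by the item decodes to an entire choice function; verifying sequential compactness of this tree-of-reals space by a level-by-level K\"onig-style extraction (bounded by the $|X_{n}|$ and iterated with $\IND_{2}$) is where the real work lies. For the boundedness- and continuity-type items one instead feeds in the decoding function and reads the choice off the witnessing sequence attached to the failure of the relevant uniformity.

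Finally, for the two variants in the closing sentence I would check robustness of the construction. Requiring a modulus of continuity only strengthens the hypothesis of the $\QFAC^{0,1}$ direction, so nothing is lost there; for the reverse direction one must exhibit the decoding functions together with an explicit modulus, which the geometric separation of the rescaled copies readily provides. Replacing continuity by topological continuity in \eqref{itemaa}--\eqref{discu} is handled by noting that the functions we construct are topologically continuous and that the arguments above go through for preimages of open sets; the only thing to confirm is that, on the spaces used, topological continuity still licenses the sequential arguments, which it does because the relevant limits occur only along the explicitly constructed convergent sequences.
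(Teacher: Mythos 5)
Your first direction ($\QFAC^{0,1}$ implies items (a)--(i)) matches the paper: rerun the textbook proof-by-contradiction, use $\QFAC^{0,1}$ once to extract the witnessing sequence, then apply sequential compactness. Your guess about why Ascoli--Arzel\`a is omitted there is also consistent with the paper. The problem is the reversal, where your construction has a genuine gap. A space built from disjoint rescaled copies of the $X_{n}$ accumulating at a limit point of $[0,1]$ is \emph{unconditionally} sequentially compact and satisfies every one of items (a)--(j): every point other than the accumulation point is isolated, any continuous function is forced to be bounded near the accumulation point, and there is simply no continuous unbounded function, no equicontinuous non-uniformly-equicontinuous family, etc., to contradict the items with. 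The phrase ``reads the choice off the witnessing sequence attached to the failure of the relevant uniformity'' is circular, because on this space no such failure occurs; and even a witnessing sequence would only select elements from a subsequence of the $X_{n}$, not from all of them.

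The paper's construction is different in three essential respects that your proposal misses. First, the points of the space are the \emph{finite partial choice sequences} $w$ with $w(i)\in X_{i}$ for $i<|w|$, not (rescalings of) the elements of the $X_{n}$ themselves. Second, the metric $d_{0}(w,v)=|2^{-|w|}-2^{-|v|}|$ depends only on length, with $w=_{M_{0}}v$ iff $|w|=|v|$; hence every point is isolated, \emph{every} function is (even topologically) continuous with a modulus, and the length function $f(w)=|w|$ is a continuous unbounded function. Third, and crucially, sequential compactness of this space is not proved outright: it follows \emph{from the assumed failure of} $\FC$, since a sequence of partial choices of unbounded length would arithmetically yield a full choice sequence, while a sequence of bounded length has only finitely many $=_{M_{0}}$-classes and the pigeonhole applies to lengths (no bound on $|X_{n}|$ is needed --- your ``K\"onig-style extraction bounded by the $|X_{n}|$'' presupposes bounds that $\FC$, unlike $\FC'$, does not supply). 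The whole reversal is a proof by contradiction: if $\FC$ fails, $(M_{0},d_{0})$ is a sequentially compact space on which each item visibly fails (unbounded continuous $f$, empty intersection of the nested closed sets $C_{n}=\{w:|w|>n+1\}$, the closed sets of odd/even lengths at distance $0$, and so on). Your tree-of-branches variant for items (c), (h), (i) gestures at the right objects but cannot be run directly: if the points are infinite branches the space is empty when $\FC$ fails, and if they are finite partial choices you need exactly the contradiction structure and the length-only metric described above.
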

\begin{proof}
We first derive $\FC$ from item \eqref{itemaa} via a proof-by-contradiction.  
To this end, fix a sequence of non-empty finite sets of reals $(X_{n})_{n\in \N}$.  Suppose there is no sequence $(x_{n})_{n\in \N}$ of reals such that $x_{n}\in X_{n}$ for all $n\in \N$. 
We now define
\be\label{MZ}
M_{0}:=\{ w^{1^{*}}: (\forall i<|w|)(w(i)\in X_{i})\}, 
\ee
where $w^{1^{*}}$ is a finite sequence of reals of length $|w|$, readily coded using $(\exists^{2})$.   
We define the equivalence relation `$=_{M_{0}}$' as follows: the relation $w=_{M_{0}}v$ holds if $|w|=_{0}|v|$, where $w, v\in M_{0}$.
The metric $d_{0}:M_{0}^{2}\di \R$ is defined as $d_{0}(w, v)=|\frac{1}{2^{|v|}}-\frac{1}{2^{|w|}}|$ for any $w, v\in M_{0}$.
We then have $d_{0}(v, w)=_{\R}0 \asa |v|=_{0}|w|\asa  v=_{M_{0}}w$ as required.  
We also have $0\leq d_{0}(v, w)=_{\R}d_{0}(w, v)$ for any $v, w\in M_{0}$, while for any $z\in M_{0}$ we observe:
\[\textstyle
d_{0}(v, w)=|\frac{1}{2^{|v|}}-\frac{1}{2^{|w|}}|= |\frac{1}{2^{|v|}}-\frac{1}{2^{|z|}}+\frac{1}{2^{|z|}}   -\frac{1}{2^{|w|}}|\leq |\frac{1}{2^{|v|}}-\frac{1}{2^{|z|}}|+|\frac{1}{2^{|z|}}   -\frac{1}{2^{|w|}}|=d_{0}(v, z)+d_{0}(z, w)
\]
by the triangle equality of the absolute value on the reals.  Hence, $(M_{0}, d_{0})$ is a metric space as in Definition \ref{donkc}.

\smallskip

To show that $(M_{0}, d_{0})$ is sequentially compact, let $(w_{n})_{n\in \N}$ be a sequence in $M_{0}$ and consider the following case distinction.  
In case $(\forall n\in \N)( |w_{n}|<m_{0})$ for some fixed $m_{0}\in \N$, then $(w_{n})_{n\in \N}$ contains at most $(m_{0}+1)!$ different
elements.  The pigeon hole principle now implies that at least one $w_{n_{0}}$ occurs infinitely often in $(w_{n})_{n\in \N}$, i.e.\ $(w_{n_{0}})_{n\in \N}$ is a convergent sub-sequence.  
In case $(\forall  m\in \N)(\exists n\in \N)( |w_{n}|\geq m)$, the sequence $(w_{n})_{n\in \N}$ yields a sequence $(x_{n})_{n\in \N}$ such that $x_{n}\in X_{n}$ for all $n\in \N$, which is impossible by assumption.
Hence, $(M_{0}, d_{0})$ is a sequentially compact metric space.  

\smallskip

Next, define $f:M_{0}\di \R$ as $f(w):=|w|$, which is clearly not bounded on $M_{0}$, which one shows using $\IND_{2}$.  
To show that $f$ is continuous at $w_{0}\in M_{0}$, consider the formula $|\frac{1}{2^{|w_{0}|}}-\frac{1}{2^{|v|}}|=d_{0}(v, w_{0})<\frac{1}{2^{N}}$; the latter is false for $N\geq |w_{0}|+2$ and any $v \ne_{M_{0}}w_{0}$.  Hence, the following formula is vacuously true: 
\be\label{coki}\textstyle
(\forall k\in \N)(\exists N\in \N)(\forall v\in B_{d_{0}}^{M_{0}}(w_{0}, \frac{1}{2^{N}}))(|f(w_{0})-f(v)|<_{\R} \frac{1}{2^{k}}).
\ee
i.e.\ $f$ is continuous at $w_{0}\in M_{0}$, with a modulus of continuity given by $h(w,k):= \frac{1}{2^{|w|+k+2}}$.
To see that $f$ is also topologically continuous,  fix an open set $V\subset \R$ and fix $w_{0}\in f^{-1}(V)$.   
Then for $N_{0}:=|w_{0}|+2$, one verifies that $B_{d}^{M_{0}}(w_{0}, \frac{1}{2^{N_{0}}})\subset f^{-1}(V)$, i.e.\ $f^{-1}(V)$ is open.  
Thus, $f:M_{0}\di \R$ is a continuous but unbounded function on a sequentially compact metric space $(M_{0}, d_{0})$, contradicting item \eqref{itemaa}.
Item \eqref{itemgas} also implies $\FC$ as $f$ is not uniformly continuous.  For item~\eqref{itemk}, $g:M_{0}\di \R$ defined as $g(w):=\frac{1}{2^{|w|}}$ is continuous in the 
same way as for $f$.  However, using $\IND_{2}$, the infimum of $g$ on $M_{0}$ is $0$, but there is no $w\in M_{0} $ with $g(w)=_{\R}0$, by definition.  Hence, item \eqref{itemk} also implies $\FC$.

\smallskip

Now assume item \eqref{diniitem} and suppose $\FC$ is again false; letting $(X_{n})_{n\in \N} $ and $( M_{0},d_{0})$ be as in the previous paragraph, we define $f_{n}: (\N\times M_{0})\di \R$ as:
\be\label{frion}
f_{n}(w):=
\begin{cases}
|w| & \textup{ if $|w|\leq n$}\\
0 & \textup{ otherwise}
\end{cases}.
\ee
Clearly, $\lim_{n\di \infty}f_{n}(w)=f(w)$ and $f_{n}(w)\leq f_{n+1}(w)$ for $w\in M_{0}$; $f_{n}$ is continuous in the same way as for $f$.  
Item \eqref{diniitem} implies that the convergence is \emph{uniform}, i.e.\
\be\label{prev}\textstyle
(\forall k\in \N)(\exists N\in \N)(\forall w\in M_{0})(\forall n\geq N)( |f_{n}(w)-f(w)|<\frac{1}{2^{k}}),
\ee
which yields a contradiction by letting $N_{1}\in \N$ be as in \eqref{prev} for $k=1$ and choosing $w_{1}\in M$ of length $N_{1}+1$ using $\IND_{2}$. 
One derives $\FC$ from item \eqref{equi} in the same way.   

\smallskip

Next, regarding item \eqref{cml}, suppose $\FC$ is false and consider again $(M_{0}, d_{0})$.
Define the continuous function $f:M_{0}\di \R$ by $f(w)=q_{|w|}$ where $(q_{n})_{n\in \N}$ is an enumeration of the rationals without repetitions.  
Using $\IND_{2}$, we have $f(M_{0})=\Q$ and the latter is not closed while $M_{0}$ is, contradicting item \eqref{cml}, and $\FC$ must hold.
To obtain the latter from item \eqref{discu}, note that $(M_{0}, d_{0})$ is infinite (using $\IND_{2}$) while 
all functions $f:M_{0}\di \R$ are continuous as \eqref{coki} is vacuously true.  
Regarding item \eqref{AA}, assuming again that $\FC$ is false, the sequence $(f_{n})_{n\in \N}$ as in \eqref{frion} is equicontinuous:
\[\textstyle
(\forall k\in \N, w\in M_{0})(\exists N\in \N)(\forall v\in B_{d_{0}}^{M_{0}}(w, \frac{1}{2^{N}}))(\forall n\in \N)(|f_{n}(w)-f_{n}(v)|<_{\R} \frac{1}{2^{k}}).
\]
which (vacuously) holds in the same way as for \eqref{coki}.  However, as for item \eqref{diniitem}, uniform convergence (of a sub-sequence) is false, i.e.\ item \eqref{AA} also implies $\FC$.
For item \eqref{wikked}, suppose $\FC$ is false, define $C_{n}:= \{w\in M_{0}: |w|>n+1 \} $, and verify that this closed and non-empty set has diameter at most $\frac{1}{2^{n}}$, using $\IND_{2}$.  
Since $\cap_{n\in \N}C_{n}=\emptyset$, we obtain $\FC$ from item  \eqref{wikked}.  For item \eqref{dizzy}, suppose $\FC$ is false, and define $C= \{ w\in M_{0}: |w| \textup{ is odd}\} $ and $D= \{ w\in M_{0}: |w| \textup{ is even}\} $.
One readily verifies that $C\cap D=\emptyset$, $C, D$ are closed, and $d(C, D)=0$.  

\smallskip

To establish the items in the theorem in $\ACAo+\QFAC^{0,1}$, the usual proof-by-contradiction goes through.  
A proof sketch of item \eqref{itemaa} as follows: let $(M, d)$ be as in the latter and suppose $f:M\di \R$ is continuous and unbounded, i.e.\ $(\forall n\in \N)(\exists x\in M)(|f(x)|>n)$.  
Since $M\subset \R$ and real numbers are represented by elements of Baire space, we may apply $\QFAC^{0,1}$ to obtain $(x_{n})_{n\in \N}$ in $M$ such that $|f(x_{n})|>n$ for all $n\in \N$.  
Since $M$ is sequentially compact, $(x_{n})_{n\in \N}$ has a convergent sub-sequence, say with limit $y\in M$.  Clearly, $f$ is not continuous at $y\in M$, a contradiction.  
To obtain \eqref{discu}, apply $\QFAC^{0,1}$ to the statement that $M\subset [0,1]$ is infinite, yielding a sequence $(w_{n})_{n\in \N}$ in $M$.  
Now define $f:M\di \R$ as $f(x_{n})=n$ and $f(y)=0$ for $y\ne x_{m}$ for all $m\in \N$.  Since $f$ is unbounded on $M$, it is discontinuous by item \eqref{itemaa}.  
Most other items are established using $\QFAC^{0,1}$ in the same way. 

\smallskip

We also sketch how $\QFAC^{0,1}$ implies item \eqref{cml}.  To this end, let $f, M$ be as in the closed map lemma and suppose $f(C)$ is not closed for closed $C\subset M$. 
Hence, there is $y_{0}\not \in f(C)$ such that $(\forall k\in \N)(\exists y\in B(y_{0}, \frac{1}{2^{k}}))( y\in f(C) )$.  
By definition, the latter formula means $(\forall k\in \N)(\exists x\in C)( |f(x)-y_{0}|<\frac{1}{2^{k}} )$.
Apply $\QFAC^{0,1}$ to obtain a sequence $(x_{n})_{n\in \N}$ in $C$ with $(\forall k\in \N)( |f(x_{k})-y_{0}|<\frac{1}{2^{k}} )$.
By sequential compactness, there is a convergent sub-sequence $(z_{n})_{n\in \N}$, say with limit $z$.  
Since $C$ is closed, we have $z\in C$ and since $f$ is continuous (and hence sequentially continuous) also $f(z)=y_{0}$.
This contradicts $y_{0}\not\in f(C)$ and the closed map lemma therefore follows from $\QFAC^{0,1}$.
\end{proof}
The final part of the proof also goes through if $f$ is only usco (see Def.\ \eqref{flung}). 
As to other generalisations of Theorem \ref{klonz}, the latter still goes through for `continuity' replaced by `absolute differentiability' from \cite{incell} formulated\footnote{The correct formulation based on \cite{incell} is that `$f:M\di \R$ is (absolutely) differentiable on the metric space $(M, d)$' in case we have
\[\textstyle
(\forall k\in \N, p\in M)(\exists N\in \N)(\forall x, y\in M)(0<d(x, p), d(y, p)<\frac{1}{2^{N}}\di \big| \frac{|f(x)-f(p)|}{d(x, p)}-  \frac{|f(y)-f(p)|}{d(y, p)} \big| <\frac{1}{2^{k}}),
\]
which is the `epsilon-delta' definition formulated to avoid the existence of the derivative. 
} appropriately.  

\smallskip

Finally, we observe that $(M_{0}, d_{0})$ from \eqref{MZ} is not (countably) compact, i.e.\ we need a slightly different approach for the latter, to be found in the next section.

\subsubsection{Compact spaces}\label{gawu2}
In this section, we establish that basic properties of (countably) compact spaces inhabit the range of hyperarithmetical analysis.

\smallskip

First of all, the following theorem is a version of Theorem \ref{klonz} for (countably) compact spaces and sequential continuity.  
We seem to (only) need sequential compactness to guarantee that everything remains provable in $\ACAo+\QFAC^{0,1}$.  
\begin{thm}[$\ACAo+\IND_{2}$]\label{klonz2}
The principle $\FC$ follows from any of the items \eqref{itemaaz}-\eqref{AAz} where $(M, d)$ is any metric space with $M\subset \R$; the principle $\QFAC^{0,1}$ implies all these items.
\begin{enumerate}
\renewcommand{\theenumi}{\alph{enumi}}
\item For \(weakly\) countably-compact and sequentially compact $(M, d)$, any sequentially continuous $f:M\di \R$ is bounded.\label{itemaaz}
\item The previous item with `is bounded' replaced by `is \(uniformly\) continuous'.\label{zama}
\item For a \(weakly\) countably-compact $(M, d)$ that is infinite, there is $f:M\di \R$ that is not sequentially continuous.\label{discu2}
\label{itemgasz}
\item The first item with `\(weakly\) countably-compact' replaced by `compact' or `complete and totally bounded'.\label{AAz}
\end{enumerate}
\end{thm}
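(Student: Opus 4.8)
The plan is to follow the template of the proof of Theorem~\ref{klonz}, but to replace the space $(M_{0},d_{0})$ from \eqref{MZ} by its one-point compactification, which is precisely the fix anticipated in the closing remark of the previous section. Concretely, assuming $\FC$ is false for a fixed sequence of non-empty finite sets $(X_{n})_{n\in\N}$, I would set $M_{1}:=M_{0}\cup\{w_\star\}$, where $w_\star$ is a fresh point placed ``at position $0$'': extend $d_{0}$ by $d_{1}(w_\star,w):=\frac{1}{2^{|w|}}$ for $w\in M_{0}$ and $d_{1}(w_\star,w_\star):=0$. Up to the coding of finite sequences already used in \eqref{MZ}, this adjoins the limit point $0$ to $\{\frac{1}{2^{|w|}}:w\in M_{0}\}$, so that $M_{1}\subset\R$ becomes genuinely compact rather than merely sequentially compact.

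For the first direction ($\FC$ from the items), the key observation is that $M_{1}$ is provably (in $\ACAo+\IND_{2}$) compact, countably-compact, weakly countably-compact, and complete-and-totally-bounded all at once: any ball, open set, or $\Psi$-ball around $w_\star$ already swallows every level beyond some $N$ computable via $\mu$, and the finitely many remaining levels $0,\dots,N$ are covered by exhibiting one representative each, a finite choice handled with $\IND_{2}$. At the same time, the natural witness $f(w):=|w|$ (with $f(w_\star):=0$) is, \emph{under the assumption that $\FC$ fails}, sequentially continuous on all of $M_{1}$. This is the crux: continuity away from $w_\star$ is trivial since each level is $d_{1}$-isolated, while at $w_\star$ it is \emph{vacuous}, because any sequence $d_{1}$-converging to $w_\star$ would contain $M_{0}$-terms of unbounded length, and reading off the $k$-th entry of such terms would definably produce a choice sequence $x_{k}\in X_{k}$, contradicting the failure of $\FC$. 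Since $f$ is unbounded (by $\IND_{2}$) and not uniformly continuous (levels $m$ and $m+1$ lie at distance $\frac{1}{2^{m+1}}$ while $f$ jumps by $1$), applying item~\eqref{itemaaz}, item~\eqref{zama}, or item~\eqref{AAz} to $(M_{1},d_{1})$ and $f$ yields a contradiction, hence $\FC$. For item~\eqref{discu2} I would instead observe that, under the same assumption, \emph{every} $f:M_{1}\di\R$ is sequentially continuous (each convergent sequence is eventually constant), so the asserted non-sequentially-continuous function cannot exist, and $\FC$ follows again.

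For the second direction ($\QFAC^{0,1}$ implies the items), I would reuse the proof-by-contradiction from the end of the proof of Theorem~\ref{klonz}. For items~\eqref{itemaaz} and \eqref{zama} the assumed sequential compactness suffices: from ``$f$ is unbounded'' (resp.\ ``$f$ is not uniformly continuous'') one applies $\QFAC^{0,1}$ to extract a sequence (resp.\ a pair of sequences) witnessing the failure, passes to a convergent subsequence by sequential compactness, and contradicts sequential continuity of $f$ at the limit. For items~\eqref{discu2} and \eqref{AAz} the missing ingredient is sequential compactness itself, which I would derive from the stated compactness notion: total boundedness lets one use $\QFAC^{0,1}$ to thin any sequence to a Cauchy subsequence, and completeness makes it converge, while the ``compact'' case reduces to this; for item~\eqref{discu2}, countable-compactness yields a cluster point of an extracted sequence $(x_{n})_{n\in\N}$ of distinct points (if there were none, the sets $M\setminus\overline{\{x_{k}:k\ge n\}}$ would form a countable open cover with no finite subcover), along which the unbounded function $f(x_{n}):=n$ fails sequential continuity.

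I expect the main obstacle to lie in this second direction, specifically in item~\eqref{discu2}. The subtlety is that the purely ball-based ``weakly countably-compact'' hypothesis is genuinely insufficient here: an uncountable set with the discrete metric is weakly countably-compact and infinite yet carries only sequentially continuous functions, so one must really invoke the open-cover (countably-compact) formulation to manufacture a cluster point, and take care that the sets $\overline{\{x_{k}:k\ge n\}}$ and their complements exist as legitimate objects over $\ACAo$. A secondary, more routine difficulty is verifying uniformly, and provably in $\ACAo+\IND_{2}$, that the single space $(M_{1},d_{1})$ meets \emph{each} of the compactness notions scattered across the four items, so that one construction discharges the entire first direction.
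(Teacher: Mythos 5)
Your construction coincides exactly with the paper's: the proof of Theorem \ref{klonz2} adjoins a single new point $0_{M_{1}}$ to $M_{0}$ with $d_{1}(0_{M_{1}},w)=\frac{1}{2^{|w|}}$, uses the same witness $g(w)=|w|$, $g(0_{M_{1}})=0$, the same vacuous sequential-continuity argument at the new point (a non-eventually-constant convergent sequence would produce a choice sequence), the same ``every function is sequentially continuous'' argument for item \eqref{discu2}, and the same verification that one ball around $0_{M_{1}}$ plus an $\IND_{2}$-enumeration of the finitely many remaining levels handles all the compactness notions of item \eqref{AAz}, while the converse direction is likewise dispatched by the $\QFAC^{0,1}$ proof-by-contradiction of Theorem \ref{klonz}. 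Your flagged obstacle for item \eqref{discu2} under the ``weakly countably-compact'' reading is well taken --- the unit interval with the discrete metric is weakly countably-compact and infinite yet carries only sequentially continuous functions --- but this points at a gap in the paper's own terse treatment (``the usual proof-by-contradiction goes through'') rather than in your argument, which correctly insists on the open-cover formulation there.
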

\begin{proof}
We first derive $\FC$ from item \eqref{itemaaz} via a proof-by-contradiction.  
To this end, fix a sequence of non-empty finite sets of reals $(X_{n})_{n\in \N}$.  Suppose there is no sequence $(x_{n})_{n\in \N}$ of reals such that $x_{n}\in X_{n}$ for all $n\in \N$ and recall $M_{0}$ from \eqref{MZ}. 
Now define $M_{1}=M_{0}\cup \{0_{M_{1}}\}$ where $0_{M_{1}}$ is a new symbol such that $w\ne_{M_{1}} 0$ for $w\in M_{0}$ and `$=_{M_{1}}$' is `$=_{M_{0}}$' otherwise.  
Define $d_{1}:M_{1}^{2}\di \R$ as $d_{0}$ on $M_{0}$, as $d_{1}(w, 0_{M_{1}}):=d(0_{M_{1}}, w)=\frac{1}{2^{|w|}}$ for $w\in M_{0}$, and $d_{1}(0_{M_{1}}, 0_{M_{1}})=0$. 
Then $(M_{1}, d_{1})$ is a metric, which is shown in the same way as for $(M_{0}, d_{0})$.  

\smallskip

To show that $(M_{1}, d_{1})$ is countably-compact, let $(O_{n})_{n\in \N}$ be an open cover of $M_{1}$ and suppose $n_{1}\in \N$ is sucht hat $0_{M_{1}}\in O_{n_{1}}$. 
By definition, there is $N_{1}\in \N$ such that $B_{d_{1}}^{M_{1}}(0_{M_{1}}, \frac{1}{2^{N_{1}}})\subset O_{n_{0}}$, i.e.\ $d(0_{M_{1}},w )=\frac{1}{2^{|w|}}< \frac{1}{2^{N_{1}}}$ implies $w\in O_{n_{1}}$ for $w\in M_{0}$.  
Now use $\IND_{2}$ to enumerate the finitely many $v\in M_{0} $ such that $|v|\leq N_{1}$.  This finite sequence is covered by some $\cup_{n\leq n_{2}}O_{n}$, i.e.\ we have obtained a finite sub-covering of $M_{1}$, namely $\cup_{n\leq \max (n_{1},n_{2})}O_{n}$.   Moreover, $(M_{1}, d_{1})$ is sequentially compact, which can be proved via the same case distinction as for $(M_{0}, d_{0})$ in the proof of Theorem \ref{klonz}. 

\smallskip

Next, define $g:M_{1}\di \R$ as $g(w):=|w|$ for $w\in M_{0}$ and $g(0_{M_{1}})=_{\R}0$, which is clearly not bounded on $M_{1}$; this follows again via $\IND_{2}$.  
Then $g$ is continuous at $w_{0}\in M_{0}$ in the same way as $f$ from the proof of Theorem \ref{klonz}, namely since \eqref{coki} is vacuously true.   
To show that $g$ is sequentially continuous at $0_{M_{1}}$, let $(w_{n})_{n\in \N}$ be a sequence converging to $0_{M_{1}}$.  
In case this sequence is eventually constant $0_{M_{1}}$, clearly $g(0_{M})=\lim_{n\di \infty }g(w_{n})$ as required.  In case $(w_{n})_{n\in \N}$ is not eventually constant $0_{M_{1}}$, the convergence to $0_{M_{1}}$ in the $d_{1}$-metric
implies that for any $n\in \N$, there is $m\geq n$ with $|w_{m}|>n$.  Thus, $(w_{n})_{n\in \N}$ yields a sequence $(x_{n})_{n\in \N}$ such that $x_{n}\in X_{n}$ for all $n\in \N$, which contradicts our assumptions, i.e.\ this case cannot occur.  As a result, $g:M_{1}\di \R$ is sequentially continuous.  
Since, it is also unbounded (thanks to $\IND_{2}$), we obtain a contradiction with item \eqref{itemaaz}.
Thus, \eqref{itemaaz} implies $\FC$, and the same for item \eqref{zama}.  
To obtain $\FC$ from item~\eqref{discu2}, note that $(M_{1}, d_{1})$ is infinite (using $\IND_{2}$) while 
\emph{all} functions $f:M_{1}\di \R$ are \emph{sequentially} continuous by the previous.  

\smallskip

To show that $(M_{1}, d_{1})$ satisfies the properties in item \eqref{AAz}, note that for $\Psi:M_{1}\di \R^{+}$, the ball $B_{d_{1}}^{M_{1}}(0_{M_{1}},\Psi(0_{M_{1}})  )$
covers all but finitely many points of $M_{1}$ (in the same way as $O_{n_{0}}$ from the second paragraph of the proof).  Hence, $(M_{1}, d_{1})$ is compact, and totally boundedness follows in exactly the same way. 
For completeness, let $(w_{n})_{n\in \N}$ be a Cauchy sequence in $M_{1}$, i.e.\ we have
\[\textstyle
(\forall k\in \N)(\exists N\in \N)(\forall n, m\geq N)( d_{1}(w_{n}, w_{m})<\frac{1}{2^{k}}).  
\]
As above, $(w_{n})_{n\in \N}$ is either eventually constant or provides a sequence $(x_{n})_{n\in \N}$ such that $x_{n}\in X_{n}$ for all $n\in \N$.  The latter case is impossible by assumption and the former case is trivial.   

\smallskip

To establish the items in the theorem in $\ACAo+\QFAC^{0,1}$, the usual proof-by-contradiction goes through as in the proof of Theorem \ref{klonz}.
\end{proof}
We believe that we cannot use epsilon-delta or topological continuity in the previous theorem.  
Nonetheless, we have the following corollary that makes use of the sequential\footnote{A function $f:M\di \R$ is called \emph{sequentially uniformly continuous} if for any sequences $(w_{n})_{n\in \N}$, $(v_{n})_{m\in \N}$ in $M$ such that $\lim_{n\di \infty}d(w_{n}, v_{n})=0$, we have $\lim_{n\di \infty}|f(w_{n})-f(v_{n})|=0$.} definition of uniform continuity.  
\begin{cor}
Over $\ACAo+\IND_{2}$, items \eqref{itemaa}-\eqref{discu} in Theorem \ref{klonz} and items \eqref{itemaaz}-\eqref{AAz} in Theorem \ref{klonz2} are intermediate between $\QFAC^{0,1}$ and $\FC$ if we replace `continuity' by `sequential uniform continuity'.
\end{cor}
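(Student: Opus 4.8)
The plan is to recycle the metric spaces $(M_{0},d_{0})$ and $(M_{1},d_{1})$ and the witness functions from the proofs of Theorems \ref{klonz} and \ref{klonz2}, adding only the verification that the continuity hypotheses, now phrased via sequential uniform continuity, behave correctly. Throughout I read the replacement so that a \emph{hypothesis} `$f$ is continuous' becomes `$f$ is sequentially uniformly continuous', while a \emph{conclusion} such as `is uniformly continuous', `the convergence is uniform', or `there is a discontinuous function' keeps its literal ($\eps$-$\delta$, resp.\ negated) meaning. The point of the sequential formulation is that it is exactly tailored to the two-sequence arguments that $\QFAC^{0,1}$ produces.

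For the implications to $\FC$, the single new ingredient is the following observation over $\ACAo+\IND_{2}$ once $\FC$ is assumed false. First, \emph{every sequence in $M_{0}$ from \eqref{MZ} has uniformly bounded length}: if some $(w_{n})_{n\in\N}$ had $(\forall m)(\exists n)(|w_{n}|\geq m)$, then Feferman's $\mu$ (available from $(\exists^{2})$) selects, for each $k$, the first $w_{n}$ of length $>k$ and returns $w_{n}(k)\in X_{k}$, yielding a choice sequence and contradicting $\neg\FC$; this is just the sequential-compactness case distinction already used. Second, \emph{under $\neg\FC$ every $f:M_{0}\di\R$ is sequentially uniformly continuous}: given $(w_{n})_{n\in\N},(v_{n})_{n\in\N}$ with $d_{0}(w_{n},v_{n})\to 0$, both have length $<m$ for a fixed $m$ by the first point, the finitely many values $\frac{1}{2^{\ell}}$ (with $\ell<m$) are separated by at least $\frac{1}{2^{m-1}}$, so $d_{0}(w_{n},v_{n})<\frac{1}{2^{m-1}}$ eventually forces $|w_{n}|=|v_{n}|$, hence $w_{n}=_{M_{0}}v_{n}$ and $f(w_{n})=f(v_{n})$ by \eqref{koooooo}. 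Both facts persist for $(M_{1},d_{1})$ once one treats the adjoined point $0_{M_{1}}$ as the value $0=\frac{1}{2^{\infty}}$, which is again $\frac{1}{2^{m-1}}$-separated from the bounded-length points. Granting this, each item follows as before: the witness functions ($f(w)=|w|$, $g(w)=\frac{1}{2^{|w|}}$, the $f_{n}$ from \eqref{frion}, the equicontinuous family, etc.) automatically satisfy the now-weaker sequential-uniform hypotheses, while they still violate boundedness, attainment of the infimum, $\eps$-$\delta$ uniform continuity, uniform equicontinuity, or uniform convergence via $\IND_{2}$ exactly as in Theorems \ref{klonz} and \ref{klonz2}; for items \eqref{discu} and \eqref{discu2}, the second fact says that \emph{no} non-sequentially-uniformly-continuous function exists on $M_{0}$ resp.\ $M_{1}$, directly contradicting the asserted existence.

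For the implications from $\QFAC^{0,1}$, note first that sequential uniform continuity implies sequential continuity (take one of the two sequences constant), so every proof-by-contradiction in Theorems \ref{klonz} and \ref{klonz2} that extracted a sequence via $\QFAC^{0,1}$ and then passed to a convergent sub-sequence via sequential compactness goes through verbatim with `sequentially uniformly continuous' in place of `continuous'. The conclusion-type items are even more direct: to show that a sequentially uniformly continuous $f$ on a sequentially compact space is $\eps$-$\delta$ uniformly continuous, assume the failure $(\exists k)(\forall N)(\exists w,v)(d(w,v)<\frac{1}{2^{N}}\wedge|f(w)-f(v)|\geq\frac{1}{2^{k}})$, apply $\QFAC^{0,1}$ to obtain $(w_{N})_{N\in\N},(v_{N})_{N\in\N}$ with $d(w_{N},v_{N})\to 0$, and feed these two sequences into sequential uniform continuity to contradict $|f(w_{N})-f(v_{N})|\geq\frac{1}{2^{k}}$; the infimum, Dini and Ascoli-Arzel\`a items are handled by the standard argument using the convergent sub-sequence supplied by sequential compactness together with sequential continuity.

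The main obstacle is the second fact in the lower-bound step, namely that $\neg\FC$ collapses \emph{all} functions on $M_{0}$ and $M_{1}$ to sequentially uniformly continuous ones; the work lies in checking the uniform $\frac{1}{2^{m-1}}$-separation of the finitely many length-classes once lengths are bounded, and in correctly incorporating the adjoined point $0_{M_{1}}$ in the countably-compact setting. The remaining care is bookkeeping: ensuring that each conclusion in the upper-bound direction is read as the genuine ($\eps$-$\delta$ or uniform-convergence) statement, so that sequential uniform continuity is a bona fide weakening of the hypothesis rather than something that trivialises the item.
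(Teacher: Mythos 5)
Your proposal is correct and follows essentially the same route as the paper: the upper bound is obtained by using $\QFAC^{0,1}$ (plus $(\exists^{2})$) to upgrade sequential uniform continuity to ($\eps$-$\delta$) uniform/sequential continuity and then reusing Theorems \ref{klonz} and \ref{klonz2}, while the lower bound reuses $(M_{0},d_{0})$ and $(M_{1},d_{1})$ together with the observation that, under $\neg\FC$, any two sequences whose mutual distance tends to zero are eventually equal, so the witness functions (indeed all functions) are sequentially uniformly continuous. Your extra detail --- bounded lengths via the choice-sequence extraction and the $\frac{1}{2^{m-1}}$-separation of the length classes --- merely spells out what the paper's proof asserts in one line.
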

\begin{proof}
The usual proof-by-contradiction using $\QFAC^{0,1}$ (and $(\exists^{2})$) shows that sequential uniform continuity implies uniform continuity.  
For the remaining implications, consider $g:M_{1}\di \R$ from the the proof of Theorem \ref{klonz2}.  This function is sequentially continuous at $0_{M_{1}}$ since any sequence converging to $0_{M_{1}}$ must be eventually constant $0_{M_{1}}$.  
Similarly, for sequences $(w_{n})_{n\in \N}$, $(v_{n})_{n\in \N}$ in $M_{1}$ $\lim_{n\di \infty}d_{1}(w_{n}, v_{n})=0$ implies that the sequences are eventually equal.    
Hence, $g$ is also sequentially uniformly continuous.   A similar proof goes through for $(M_{0}, d_{0})$ and $f$ from Theorem \ref{klonz}.
\end{proof}
We have identified a number of basic properties of continuous functions on compact metric spaces that exist in the range of hyperarithmetical analysis.  
A number of restrictions and variations are possible, which is the topic of the next section.

\subsubsection{Restrictions}\label{gawu3}
We show that some the above principles still inhabit the range of hyperarithmetical analysis if we restrict to arithmetically defined objects or Lipschitz continuity.

\smallskip

First of all, the following theorem establishes that Theorem \ref{klonz} holds if we restrict to arithmetically defined objects.
\begin{thm}\label{slonk}
Item \eqref{itemaa} from Theorem \ref{klonz} still implies $\WSAC$ over $\ACAo+\IND_{1}$ if all objects in the former item have an arithmetical definition. 
\end{thm}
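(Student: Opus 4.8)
The plan is to mimic the derivation of $\FC$ from item \eqref{itemaa} in the proof of Theorem \ref{klonz}, but to exploit \emph{uniqueness} so that the resulting tree is arithmetically defined and the weaker principle $\WSAC$ comes out. Thus I would argue by contradiction: assume item \eqref{itemaa} holds for all arithmetically defined $(M,d)$ and $f$, and suppose $\WSAC$ fails, as witnessed by some arithmetical $\varphi(n,X)$ with $(\forall n\in\N)(\exists! X\subseteq \N)\varphi(n,X)$ for which no sequence $(X_{n})_{n\in\N}$ satisfies $(\forall n\in\N)\varphi(n,X_{n})$. The decisive observation is that, since $\varphi$ is arithmetical, $(\exists^{2})$ decides $\varphi$, so the analogue of \eqref{MZ}, namely $M_{0}:=\{w : (\forall i<|w|)\varphi(i,w(i))\}$ (finite sequences of elements of $2^{\N}$, readily coded using $(\exists^{2})$), together with the metric $d_{0}(w,v):=|\tfrac{1}{2^{|w|}}-\tfrac{1}{2^{|v|}}|$ and the relation $w=_{M_{0}}v\leftrightarrow |w|=_{0}|v|$, is \emph{arithmetically defined}. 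This is exactly where uniqueness matters: by $(\exists! X)\varphi$, any $w\in M_{0}$ satisfies $w(i)=X_{i}$ for $i<|w|$ (the unique witness), so for each length there is at most one element of $M_{0}$ up to $=_{M_{0}}$.

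Next I would verify the properties needed to apply item \eqref{itemaa}. First, $\IND_{1}$ applied to the functional $Y^{2}$ defined from $\varphi$ via $(\exists^{2})$, so that $Y(n,f)=0\leftrightarrow \varphi(n,f)$, yields for every $n$ a member of $M_{0}$ of length $n$; hence $f(w):=|w|$ is unbounded on $M_{0}$, and $M_{0}$ is infinite. Secondly, $f$ is continuous with the vacuously true modulus exactly as in \eqref{coki}, and $(M_{0},d_{0})$ is a metric space just as in Theorem \ref{klonz}. Thirdly, $(M_{0},d_{0})$ is sequentially compact \emph{under the standing assumption}: given $(w_{n})_{n\in\N}$ in $M_{0}$, if the lengths are bounded by some $m_{0}$ then, by uniqueness, the $w_{n}$ take at most $m_{0}$ distinct values, so the pigeonhole principle provides a constant (hence convergent) subsequence; if the lengths are unbounded, then selecting for each $i$ the value $w_{n}(i)$ for some $n$ with $|w_{n}|>i$ (located using $\mu$) produces, by uniqueness, a genuine sequence $(X_{i})_{i\in\N}$ with $(\forall i)\varphi(i,X_{i})$, contradicting the failure of $\WSAC$. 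Thus only the bounded case occurs and $M_{0}$ is sequentially compact. Applying item \eqref{itemaa} to the arithmetically defined data $(M_{0},d_{0},f)$ then forces $f$ to be bounded, contradicting its unboundedness, whence $\WSAC$ holds.

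The main obstacle I anticipate is keeping every object strictly arithmetical, i.e.\ definable using only $(\exists^{2})$ and never the Suslin functional: this is what the hypothesis that `all objects have an arithmetical definition' demands, and it is precisely why the matrix $\varphi$ must be taken arithmetical and why unique existence is essential. Without uniqueness the nodes of a given length would not be finite in number (this is what produces the stronger $\FC$ in Theorem \ref{klonz}), and the branch extracted in the unbounded case would not be canonically determined. A secondary point to check is that $\IND_{1}$, rather than the stronger $\IND_{2}$, genuinely suffices here; this works because the unique-existence form of the induction axiom matches the unique-existence antecedent of $\WSAC$, so that the finite initial witnesses, the infinitude of $M_{0}$, and the unboundedness of $f$ are all available over $\ACAo+\IND_{1}$.
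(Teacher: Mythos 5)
Your proposal is correct and follows essentially the same route as the paper: both argue by contradiction from a failure of $\WSAC$, build the arithmetically defined space $(M_{0},d_{0})$ of finite sequences of the unique witnesses with the metric $|2^{-|w|}-2^{-|v|}|$, use $\IND_{1}$ (matching the unique-existence antecedent) to show $f(w):=|w|$ is unbounded, and observe that sequential compactness holds precisely because a length-unbounded sequence would yield the forbidden choice sequence. The only cosmetic difference is that the paper first codes the unique witnesses as singleton sets $A_{n}\subset[0,1]$ via binary expansions so as to reuse \eqref{MZ} verbatim with $M_{0}\subset\R$, whereas you keep the nodes as finite sequences of elements of $2^{\N}$.
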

\begin{proof}
In a nutshell, we can modify the above proofs to obtain (only) $\WSAC$ while all relevant objects can be defined using $(\exists^{2})$.  
To this end, let $\varphi$ be arithmetical and such that $(\forall n\in \N)(\exists! X\subset \N)\varphi(n, X)$, but there is no sequence $(X_{n})_{n\in \N}$ with $(\forall n\in \N)\varphi(n, X_{n})$.  
Use $\exists^{2}$ to define $\eta:[0,1]\di (2^{\N}\times 2^{\N})$ such that $\eta(x)=(f, g)$ outputs the binary expansions of $x$, taking $f=g$ if there is only one.  
Define the following set using $\exists^{2}$:
\[
A:=\{x\in [0,1]:  (\exists m\in \N)(\varphi(m,\eta(x)(1))\vee \varphi(m, \eta(x)(2))) \}
\]
and $Y(x):=(\mu m)[\varphi(m,\eta(x)(1)\vee \varphi(m, \eta(x)(2)))]$.
Then $Y$ is injective and surjective on $A$. In particular $A_{m}:=\{x\in [0,1]: \varphi(m,\eta(x)(1))\vee \varphi(m, \eta(x)(2)) \} $ contains exactly one element by definition. 
Using $X_{n}=A_{n}$, the metric space $(M_{0}, d_{0})$ as in \eqref{MZ} in the proof of Theorem \ref{klonz} now has an arithmetical definition.  The same holds for the function $F:M_{0}\di \R$ where $F(w):=|w|$.  The rest of the proof of item~\eqref{itemaa} of Theorem \ref{klonz} now goes through, using $\IND_{1}$ instead of $\IND_{2}$ where relevant, yielding in particular a contradiction.  Hence, there must be a sequence $(X_{n})_{n\in \N}$ with $(\forall n\in \N)\varphi(n, X_{n})$, i.e.\ $\WSAC$ follows as required.
\end{proof}
Secondly, we show that we may replace `continuity' by `Lipschitz continuity' in some of the above principles.  
\bdefi A function $f:M\di \R$ is \emph{$\alpha$-H\"older-continuous} in case there exist $M, \alpha>0$ such that for any $v, w\in M$:
\[
|f(v)-f(w)|\leq M d(v, w)^{\alpha},
\]
A function is \emph{Lipschitz \(continuous\)} if is is $1$-H\"older-continuous.  
\edefi
\begin{thm}[$\ACAo+\IND_{2}$]\label{klonzzz}
The principle $\FC$ follows from any of the items \eqref{prepo}-\eqref{itemkkk} where $(M, d)$ is any metric space with $M\subset \R$; the principle $\QFAC^{0,1}$ implies items \eqref{prepo}-\eqref{itemkkk}.
\begin{enumerate}
\renewcommand{\theenumi}{\alph{enumi}}
\item For a metric space $(M, d)$, any sequentially compact $C\subseteq M$ is bounded, i.e.\ there are $w\in M, m\in \N$ with $(\forall v\in C)(d(v, w)\leq m)$ \(see \cite{bartle2}*{p.\ 333}\).\label{prepo}
\item For sequentially compact $(M, d)$, any uniformly continuous $f:M\di \R$ is bounded.\label{itemak}
\item The previous item with `uniformly' replaced by `$\alpha$-H\"older' or `Lipschitz'.  
\label{itemgask}
\item For sequentially compact $(M, d)$ that is infinite, there exists $f:M\di \R$ that is bounded but not Lipschitz continuous.\label{discu3}
\item For sequentially compact and bounded $(M, d)$ and Lipschitz $f:M\di \R$ with $\inf_{x\in M}f(x)=y\in \R$ given, there is $x\in M$ with $f(x)=y$.\label{itemkkk}
\end{enumerate}
\end{thm}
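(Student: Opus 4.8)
The plan is to follow the template of the proof of Theorem~\ref{klonz} closely: for the reversals I assume $\FC$ fails, fix a sequence $(X_{n})_{n\in\N}$ of non-empty finite sets of reals admitting no choice sequence, and reuse the set $M_{0}$ from \eqref{MZ}; for the forward direction I run the usual proof-by-contradiction, extracting sequences via $\QFAC^{0,1}$ (legitimate since $M\subseteq\R$) and invoking sequential compactness. The one genuinely new ingredient is that the metric $d_{0}$ from the proof of Theorem~\ref{klonz} is unsuitable here, since $w\mapsto|w|$ is neither uniformly continuous nor Lipschitz for $d_{0}$, and bounded functions need not be Lipschitz either. I would therefore equip $M_{0}$ with the alternative metric $d_{0}'(w,v):=\big|\,|w|-|v|\,\big|$. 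One checks as in Definition~\ref{donkc} that $d_{0}'$ is a metric (the triangle inequality is that of the absolute value), and that $(M_{0},d_{0}')$ is sequentially compact by the very same case distinction used for $(M_{0},d_{0})$: a sequence of bounded length has a constant (hence convergent) subsequence by the pigeonhole principle, while a sequence of unbounded length would yield a choice sequence for $(X_{n})_{n\in\N}$, contradicting $\neg\FC$.

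For the reversals of items \eqref{prepo}--\eqref{discu3} I would work in $(M_{0},d_{0}')$, exploiting two features, both obtained from $\IND_{2}$ exactly as in Theorem~\ref{klonz}: first, $M_{0}$ contains elements of every length, so it is infinite and $d_{0}'$-unbounded; second, distinct elements of $M_{0}$ lie at $d_{0}'$-distance $\geq 1$. For item \eqref{prepo}, the whole space $C:=M_{0}$ is sequentially compact yet unbounded, contradicting the item. For items \eqref{itemak} and \eqref{itemgask}, the function $f(w):=|w|$ is an isometry of $(M_{0},d_{0}')$ onto a subset of $\R$, hence Lipschitz, so $\alpha$-H\"older and uniformly continuous, while being unbounded, again a contradiction. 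For item \eqref{discu3}, the distance-$\geq 1$ property shows that \emph{every} bounded $F:M_{0}\di\R$ is automatically Lipschitz (with constant $2B$, where $|F|\leq B$), so the asserted bounded non-Lipschitz function cannot exist; thus the item's conclusion fails and $\FC$ follows.

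Item \eqref{itemkkk} is different because it hypothesises a \emph{bounded} space, for which $d_{0}'$ is useless; here I would revert to the original bounded space $(M_{0},d_{0})$ and copy the argument for item \eqref{itemk} of Theorem~\ref{klonz}. The function $g(w):=\frac{1}{2^{|w|}}$ satisfies $|g(w)-g(v)|=d_{0}(w,v)$, so it is an isometry and in particular Lipschitz; its infimum over $M_{0}$ equals the given value $0$ (using $\IND_{2}$ to produce elements of arbitrary length), yet $g(w)>0$ for every $w$, so the infimum is not attained, contradicting \eqref{itemkkk}. For the forward direction, $\QFAC^{0,1}$ implies all five items by the standard proof-by-contradiction: for \eqref{itemak}--\eqref{itemgask} and \eqref{itemkkk} one extracts from the negation a sequence $(x_{n})_{n\in\N}$ (an unbounded sequence of values, resp.\ a minimising sequence), passes to a convergent subsequence by sequential compactness, and derives a contradiction from (sequential) continuity; item \eqref{prepo} is identical with $d(x_{n},w_{0})>n$ and the fact that convergent sequences have bounded distances; and for \eqref{discu3} one uses $\QFAC^{0,1}$ on the infinitude of $M$ to obtain a convergent sequence $w_{n_{k}}\to z$ of distinct points and sets $F(x):=\min(1,\sqrt{d(x,z)})$, which is bounded but has unbounded difference quotients along $w_{n_{k}}$, hence is not Lipschitz.

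The main obstacle is not any single estimate but the choice of metric: one must recognise that the `geometric' metric $d_{0}$ of Theorem~\ref{klonz} destroys Lipschitz and uniform behaviour (points of large length are crushed together), and that replacing it by the `arithmetic' metric $d_{0}'$ simultaneously keeps sequential compactness, turns $|w|$ into an unbounded isometry, and forces all bounded functions to be Lipschitz---while remembering that item \eqref{itemkkk} requires boundedness of the space and so forces a return to $d_{0}$. Checking that $d_{0}'$ (or $d_{0}$) together with the relevant witnesses can be defined from $(\exists^{2})$, and that the counterexamples go through on $\IND_{2}$, is then routine, exactly as in the proof of Theorem~\ref{klonz}.
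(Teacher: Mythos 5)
Your proposal is correct and follows essentially the same route as the paper: the paper's proof also replaces $d_{0}$ by the metric $d_{2}(v,w)=\big|\,|v|-|w|\,\big|$ (your $d_{0}'$) on $M_{0}$ for items \eqref{prepo}--\eqref{discu3}, uses $w\mapsto|w|$ (up to a constant factor) as the unbounded Lipschitz counterexample, notes that the distance-$\geq 1$ property makes every bounded function Lipschitz for item \eqref{discu3}, and reverts to the bounded space $(M_{0},d_{0})$ with $g(w)=2^{-|w|-1}$ for item \eqref{itemkkk}. The only cosmetic differences are your choice of $C=M_{0}$ rather than the even-length subset for item \eqref{prepo}, and your $\min(1,\sqrt{d(x,z)})$ construction in the forward direction of item \eqref{discu3}, where the paper instead builds a bounded $\pm1$-valued discontinuous function on a convergent subsequence.
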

\begin{proof}
First of all, to derive item \eqref{prepo} from $\QFAC^{0,1}$, fix a metric space $(M, d)$ and let $C\subseteq M$ be sequentially compact.  
Suppose $C$ is not bounded, i.e.\ for some fixed $w_{0}\in M$, we have $(\forall m\in \N)(\exists v\in C)(d(w_{0}, v)>m )$.  Apply $\QFAC^{0,1}$ to obtain a sequence $(v_{n})_{n\in \N}$ 
such that $d(w_{0}, v_{n})>n$ for all $n\in \N$.  Clearly, this sequence cannot have a convergent sub-sequence, a contradiction, and $C$ must be bounded. 
To derive item \eqref{discu3} from $\QFAC^{0,1}$, apply $\QFAC^{0,1}$ to the statement that $M$ is infinite.  
The resulting sequence $(w_{n})_{n\in \N}$ has a convergent sub-sequence, say $(v_{n})_{n\in \N}$ with limit $v$.  
Define $f(w)= 1$ (resp.\ $f(w)=-1$) if $v_{n}=w$ and $n$ is even (resp.\ odd), and $f(w)=0$ otherwise.  
Clearly, $f:M\di \R$ is bounded but not (Lipschitz) continuous.  
By, Theorem \ref{klonz}, the other items follow from $\QFAC^{0,1}$.  

\smallskip

Secondly, to derive $\FC$ from item \eqref{prepo}, suppose $(X_{n})_{n\in \N}$ is a sequence of finite sets such that there is no sequence $(x_{n})_{n\in \N}$ with $x_{n}\in X_{n}$ for all $n\in \N$.  
Recall the set $M_{0}$ from \eqref{MZ} and define $d_{2}:M_{0}^{2}\di \R$ as $d_{2}(v, w)=\big| |v|-|w| \big|$ for $v, w\in M_{0}$.
That $d_{2}$ is a metric is readily verified: the first and third item of Definition \ref{donkc} hold by definition and the triangle equality of the absolute value; the second item in this definition holds 
since $d_{2}(v, w)=0\asa |u|=|w|\asa u=_{M_{0}} w$.  
Now, the set $C=\{w\in M_{0}:  |w| \textup{ is even}\}$ is sequentially compact, as every sequence in $C$ either has at most finitely many different members, or  yields a sequence $(x_{n})_{n\in \N}$ such that $x_{n}\in X_{n}$ for all $n\in \N$.  
We have excluded the latter by assumption, while the former trivially yields a convergent sub-sequence.  
Using $\IND_{2}$, $C$ is however not bounded in $(M_{0}, d_{2})$,  a contradiction, and item \eqref{prepo} implies $\FC$. 

\smallskip

Thirdly, to derive $\FC$ from the remaining items, let $(M_{0}, d_{2})$ be as above and note that the latter is sequentially compact as in the previous paragraph.  Now define $f:M_{0}\di \R$ as $f(u):= \frac{|u|}{2}$ and observe that $|f(u)-f(v)|= \frac{1}{2}\big| |u|-|v|\big|\leq \frac{1}{2}d_{2}(u, v)$, i.e.\ $f$ is Lifschitz (and uniformly) continuous.  
However, $\IND_{2}$ shows that $f$ is not bounded, a contradiction, and items \eqref{itemak}-\eqref{itemgask} imply $\FC$. 
Similarly, item \eqref{discu3} implies $\FC$ as $(M_{0}, d_{2})$ is such that every bounded function $f:M_{0}\di \R$ is automatically Lipschitz.
Indeed, if $|f(w)|\leq M_{0}$ for all $w\in M_{0}$, then the Lipschitz constant for $f$ can be taken to be $2M_{0}$. 

\smallskip

Finally, to derive $\FC$ from item \eqref{itemkkk}, suppose the former is false and consider again $(M_{0}, d_{0})$, which is trivially bounded due to the definition of $d_{0}$.  
Now define $g:M_{0}\di \R$ as $g(u):=\frac{1}{2^{|u|+1}}$.  This function is Lipschitz on $(M_{0}, d_{0})$ as  
\[\textstyle
|g(u)-g(v)|=|\frac{1}{2^{|u|+1}}-\frac{1}{2^{|v|+1}}|=\frac{1}{2}|\frac{1}{2^{|u|}}-\frac{1}{2^{|v|}}|=\frac{1}{2}d(u, v).
\]
However, $g$ has infimum equal to zero (using $\IND_{2}$) but is strictly positive on $M_{0}$, contradicting item \eqref{itemkkk}, which establishes the theorem. 
\end{proof}
In conclusion, many implications between the notions in Definition \ref{deaco} exist in the range of hyperarithmetical analysis, as well 
as the associated Lebesgue number lemma for countable coverings of open sets.  These are left to the reader.  

\subsubsection{Connectedness}\label{gawu4}
We show that basic properties of connected metric spaces exist in the range of hyperarithmetical analysis, including the intermediate value theorem. 
We also obtain some elegant equivalences in Theorem \ref{preppiu}. 

\smallskip

First of all, Cantor and Jordan were the first to study connectedness (\cite{wilders}), namely as in the first item in Definition \ref{chainc}.  
The connectedness notions from the latter are equivalent for compact metric spaces in light of \cite{mannetti}*{\S4.39} or \cite{pugh}*{p.\ 123}.
\bdefi[Connectedness]\label{chainc}~
\begin{itemize}
\item A metric space $(M, d)$ is \emph{chain connected} in case for any $w, v\in M$ and $\eps>0$, there is a sequence $w=x_{0},x_{1}, \dots, x_{n-1},{x_{n}}=v\in M$ such that for all $i< n$ we have $d(x_{i}, x_{i+1})<\eps$. 
\item A metric space $(M, d)$ is \emph{connected} in case $M$ is not the disjoint union of two non-empty open sets.  
\end{itemize}
\edefi
\noindent
We shall study the following generalisation of the intermediate value theorem.
\begin{princ}[{Intermediate Value Theorem}]\label{IVT}
Let $(M, d)$ be a sequentially compact and chain connected metric space and let $f:M\di \R$ be continuous.
If $f(w)<c<f(v)$ for some $v, w\in M$ and $c\in \R$, then there is $u\in M$ with $f(u)=c$.  
\end{princ}
The \emph{approximate} intermediate value theorem is the previous principle with the conclusion weakened to `then for any $\eps>0$ there is $u\in M$ with $|f(u)-c|<\eps$.'
The latter theorem is well-known from constructive mathematics (see e.g.\ \cite{bridge1}*{p.\ 40}).  
\begin{thm}[$\ACAo+\IND_{2}$] \label{preppiu}
The principle $\FC$ follows from any of the items \eqref{ivt1}-\eqref{ivt2} where $(M, d)$ is any metric space with $M\subset \R$; the principle $\QFAC^{0,1}$ implies items \eqref{ivt1}-\eqref{ivt2}.
\begin{enumerate}
\renewcommand{\theenumi}{\alph{enumi}}
\item The intermediate value theorem as in Principle \ref{IVT}.  \label{ivt1}
\item Principle \ref{IVT} for Lipschitz continuous functions.
\item The approximate intermediate value theorem.\label{ivt25}
\item Let $(M, d)$ be a sequentially compact and chain connected metric space and let $f:M\di \{0, 1\}$ be continuous.  Then $f$ is constant on $M$.  \label{ivt3}
\item Let $(M, d)$ be a sequentially compact and chain connected metric space and let $f:M\di \R$ be locally constant.  Then $f$ is constant on $M$.  \label{ivt4}
\item Let $(M, d)$ be sequentially compact and chain connected and let $f:M\di \R$ be locally constant and continuous.  Then $f$ is constant on $M$.\label{ivt5}
\item For a sequentially compact metric space $(M, d)$, chain connectedness implies connectedness \label{ivt2}
\item Let $(M, d)$ be a sequentially compact and chain connected metric space and let $f:M\di \R$ be \(Lipschitz\) continuous.  Then $f$ is bounded on $M$.  \label{h1}
\item Item \eqref{h1} with `$f$ is bounded' replaced by `$f(M)$ is not dense in $\R$'.\label{h2}
\item Item \eqref{h1} with `$f$ is bounded' replaced by `$f(M)$ is closed'.\label{h3}
\end{enumerate}
Moreover, items \eqref{ivt1}, \eqref{ivt25}, and \eqref{ivt3}-\eqref{ivt2} are equivalent. 
\end{thm}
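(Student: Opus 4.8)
The plan is to follow the proof-by-contradiction template of Theorems \ref{klonz}--\ref{klonzzz}, but to replace the discrete ``convergent-sequence'' spaces $(M_{0},d_{0}),(M_{1},d_{1}),(M_{0},d_{2})$ by a \emph{chain connected} space, since such discrete cluster spaces are never chain connected. The basic object is a branching \emph{telescope} over the set $M_{0}$ from \eqref{MZ}: to each $w\in M_{0}$ I attach a segment and glue the end of $w$'s segment to the starts of the segments of its one-step extensions $w\ast x$ with $x\in X_{|w|}$, all coded into $\R$ up to coding of finite sequences. Because each $X_{i}$ is finite, the part of this space at depth $\le D$ is a finite union of compact segments; and any sequence of \emph{unbounded} depth yields, via Feferman's $\mu$ (available from $(\exists^{2})$), a choice sequence $(x_{n})$ with $x_{n}\in X_{n}$, exactly as in the ``$(\forall m)(\exists n)(|w_{n}|\ge m)$'' case of Theorem \ref{klonz}. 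Thus \emph{assuming $\FC$ fails}, every sequence has bounded depth and hence a convergent sub-sequence, so the telescope is sequentially compact; being a connected union of segments it is trivially chain connected, and $\IND_{2}$ guarantees points of every depth.

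To derive $\FC$ from the items I would use two shapes of this space. For the connectedness and intermediate-value items \eqref{ivt1}, \eqref{ivt25}, \eqref{ivt3}--\eqref{ivt2}, take \emph{two} telescopes $A,B$ embedded (say in the plane, coded into $\R$) so that the depth-$n$ region of $A$ sits at height $+2^{-n}$ and that of $B$ at height $-2^{-n}$. Then $A,B$ are disjoint and clopen (the ``equator'' at height $0$ is absent), $d(A,B)=0$ is witnessed for each $\eps$ by a single deep node (one finite sequence, no choice needed), and $M:=A\cup B$ is chain connected (chain inside $A$, one short jump across the equator, chain inside $B$) yet \emph{disconnected}. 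The locally constant map $f\equiv 0$ on $A$, $f\equiv 1$ on $B$ is continuous and non-constant, refuting \eqref{ivt3}--\eqref{ivt5}; it straddles $c=\tfrac12$ but no $u$ has $|f(u)-\tfrac12|<\tfrac12$, refuting \eqref{ivt1} and \eqref{ivt25}; and $M$ witnesses the failure of \eqref{ivt2}. For item (b) the signed height $f(p):=\operatorname{height}(p)$ is $1$-Lipschitz, negative on $B$ and positive on $A$, but omits the value $0$ (attained only at depth $\infty$), refuting the Lipschitz intermediate value theorem. For \eqref{h1} I use a single telescope with the \emph{path} metric (infinite diameter by $\IND_{2}$) and $f(p):=\operatorname{depth}(p)$, which is $1$-Lipschitz and unbounded; for \eqref{h3} I replace $f$ by $1-2^{-\operatorname{depth}}$, a bounded Lipschitz function with non-closed range $[0,1)$; and for \eqref{h2} I arrange the depth-labels so that $f$ has range dense in $\R$. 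Each verification (metric axioms, chain connectedness, sequential compactness via $\mu$, Lipschitz bounds) is routine, with $\IND_{2}$ supplying deep points precisely where the earlier proofs used it.

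For the converse direction I would run the usual extraction argument. The template is \eqref{ivt2}: if a sequentially compact chain connected $M$ were $U\sqcup V$ with $U,V$ non-empty open, then $d(U,V)>0$, for otherwise $\QFAC^{0,1}$ turns $(\forall n)(\exists u\in U,v\in V)(d(u,v)<2^{-n})$ into sequences $(u_{n}),(v_{n})$, sequential compactness yields a common limit $p$, and openness at $p$ contradicts $u_{n},v_{n}$ lying in different pieces; but then an $\eps$-chain with $\eps<d(U,V)$ cannot pass from $U$ to $V$, contradicting chain connectedness. For \eqref{ivt1} I would first prove the approximate statement \eqref{ivt25} from chains and continuity, then apply $\QFAC^{0,1}$ to $(\forall n)(\exists u)(|f(u)-c|<2^{-n})$ to obtain $(u_{n})$, and use sequential compactness together with continuity to pass to an exact pre-image $f(u)=c$. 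The remaining items follow in the same manner, as in the final paragraphs of the proofs of Theorems \ref{klonz} and \ref{klonz2}.

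Finally, for the \emph{moreover} I would prove the equivalences directly over $\ACAo$, using $(\exists^{2})$ to decide the $\Pi_{1}^{0}$/$\Sigma_{1}^{0}$ real (in)equalities involved: \eqref{ivt1}$\Rightarrow$\eqref{ivt25} is immediate; \eqref{ivt25}$\Rightarrow$\eqref{ivt3} because a non-constant continuous $\{0,1\}$-map refutes $\tfrac12$-approximate intermediate values; \eqref{ivt3}$\Leftrightarrow$\eqref{ivt2} is the definitional link between connectedness and $\{0,1\}$-valued maps; \eqref{ivt3}$\Rightarrow$\eqref{ivt4}$\Rightarrow$\eqref{ivt5}$\Rightarrow$\eqref{ivt3} by passing between a locally constant $f$ and the $\{0,1\}$-valued map that is $1$ exactly where $f=f(a)$ (whose level sets are open since $f$ is locally constant, and which is definable by $(\exists^{2})$); and \eqref{ivt2}$\Rightarrow$\eqref{ivt1} because if $f$ omitted the value $c$ then $\{x:f(x)<c\}\sqcup\{x:f(x)>c\}$ would disconnect $M$, closing the cycle. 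The hard part throughout is the \emph{chain connected} construction: one must realise the finite-sequence tree as a genuine continuum while keeping the ``unbounded depth $\Rightarrow$ choice sequence'' mechanism intact, and—most delicately—make the two pieces of the disconnected space meet at distance $0$ only at depth $\infty$, which is exactly where $\neg\FC$ forbids a limit point and thereby rescues sequential compactness.
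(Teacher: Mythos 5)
Your proposal is correct in outline and its reversal/equivalence arguments (extracting $\eps$-chains via $\QFAC^{0,1}$, squeezing the first chain element that crosses into the second open set, passing between locally constant maps and $\{0,1\}$-valued indicators of the disconnecting open sets) essentially coincide with the paper's. The forward direction, however, takes a genuinely different and much heavier route. You build a branching continuum (``telescope'') over $M_{0}$ and, for the connectedness items, two interleaved copies approaching each other only ``at depth $\infty$''; this can be made to work, but it forces you to verify sequential compactness of a space with genuinely infinite bounded-depth parts (pigeonhole over branches plus Bolzano--Weierstrass on each segment) and to engineer the two halves so that they meet at distance $0$ without a common limit point --- exactly the delicate points you flag at the end. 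The paper sidesteps all of this with one trick: it reuses $M_{0}$ from \eqref{MZ}, whose equivalence relation identifies sequences of equal length, and equips it with $d_{3}(w,v):=|q_{|w|}-q_{|v|}|$ for an enumeration $(q_{n})_{n\in\N}$ of $\Q$ without repetitions. Under $\neg\FC$ plus $\IND_{2}$ this is a sequentially compact isometric copy of $\Q$: chain connected (chains of rationals, realised by $\IND_{2}$), disconnected (split at $\pi$), and the single Lipschitz map $w\mapsto \frac{1}{2}q_{|w|}$ has range $\Q$, which simultaneously refutes the exact and approximate intermediate value property, boundedness, non-density and closedness of the range --- so items \eqref{ivt1}--\eqref{ivt25}, \eqref{h1}--\eqref{h3} all fall to one function, and \eqref{ivt3}--\eqref{ivt2} to the split at $\pi$. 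Your ``arrange the depth-labels so that the range is dense in $\R$'' for \eqref{h2} is in effect rediscovering this labelling; if you adopt it from the start you can discard the telescope entirely and your proof collapses to the paper's.
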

\begin{proof}
First of all, we show that item \eqref{ivt1} implies $\FC$.  To this end, suppose the latter is false and consider $M_{0}$ as in \eqref{MZ}.  Let $(q_{n})_{n\in \N}$ be an enumeration of the rationals (without repetitions) and define $d_{3}:M_{0}^{2}\di \R$ as follows: $d_{3}(w, v):= |q_{|w|}-q_{|v|}|$ for $w, v\in M_{0}$.
Then $(M_{0}, d_{3})$ is a sequentially compact metric space, which is proved in the same way as for the previous metrics $d_{0}, d_{1}, d_{2}$, namely that any sequence in $M_{0}$ can have at most finitely many different elements.  
That $(M_{0}, d_{3})$ is chain connected is proved using $\IND_{2}$.   Indeed, fix $u, w\in M_{0}, \eps>0$ and consider $d_{3}(w, v)=|q_{|w|}-q_{|v| }|$.  
Let $q_{|w|}=r_{0},r_{1}, \dots, r_{k-1}, r_{k}=q_{|v|}\in \Q$ be a finite sequence such that $|r_{i}-r_{i+1}|<\eps$ for $i<k$.  Using $\IND_{2}$, there are $w_{i}\in M_{0}$ such that $q_{|w_{i}|}=r_{i}$ for $i<k$, and chain connectedness of $M_{0}$ follows. 

\smallskip
 
Now define $f:M_{0}\di \R$ by $f(w)=\frac{1}{2}q_{|w|}$, which is (Lipschitz) continuous, essentially by the definition of $d_{3}$, as we have:
\[\textstyle
|f(w)-f(v)|=|\frac{1}{2}q_{|w|}-\frac{1}{2}q_{|v|}|=\frac{1}{2}|q_{|w|}-q_{|v|}|\leq \frac{1}{2} d_{3}(w, v).
\]
However, the range of $f$ consists of rationals, i.e.\ it does not have the intermediate value property.  
This contradiction yields $\FC$.  The same proof goes through for items \eqref{h1}-\eqref{h3}. 

\smallskip

Secondly, assume $\QFAC^{0,1}$ and let $(M, d), f:M\di \R, w, v\in M,$ and $ c\in \R$ be as in Principle \ref{IVT}.  Since $M$ is chain connected, we have 
\be\label{tach}\textstyle
(\forall k\in \N)(\exists z^{1^{*}})( z(0)=w\wedge z(|w|-1)=v \wedge (\forall i<|z|-1) d(z(i), z(i+1))<\frac{1}{2^{k}}    ).
\ee
Apply $\QFAC^{0,1}$ to obtain a sequence $(z_{k})_{k\in \N}$ of finite sequences. 
Define a sequence $(t_{k})_{k\in \N}$ in $M$ where $t_{k}$ is the first element $t$ in $z_{k}$ such that $f(t)\geq c$.  
By sequential completeness, there is a convergent sub-sequence $(s_{k})_{k\in \N}$ with limit $s\in M$. 
Since $f$ is continuous, we have $\lim_{k\di \infty}f(s_{k})=f(s)$ and hence $f(s)=c$. 

\smallskip

Thirdly, to show that item \eqref{ivt2} implies $\FC$, again suppose the latter is false and consider $(M_{0}, d_{3})$.   
By the above, the latter is sequentially compact and chain connected.  To show that it is not connected, define 
$O_{1}:=\{w\in M_{0}: q_{|w|}>\pi\}$ and $O_{2}:=\{w\in M_{0}: q_{|w|}<\pi\}$, verify that they are open and disjoint, and observe that $M_{0}=O_{1}\cup O_{2}$, i.e.\ item \eqref{ivt2} is false. 
Note also that $f:M_{0}\di \R$ defined as $f(w)=1$ if $w\in O_{1}$ and $0$ otherwise, is continuous but not constant, i.e.\ item \eqref{ivt3} also implies $\FC$.

\smallskip

To derive item \eqref{ivt2} from $\QFAC^{0,1}$, let $(M, d)$ be as in the former, i.e.\ sequentially compact and chain connected.  
Suppose $M$ is not connected, i.e.\ $M=O_{1}\cup O_{2}$ where the latter are open, disjoint, and non-empty.
Now fix $v\in O_{1}$ and $w\in O_{2}$ and consider \eqref{tach}.  Apply $\QFAC^{0,1}$ to obtain a sequence $(z_{k})_{k\in \N}$ of finite sequences. 
Define sequences $(s_{k})_{k\in \N}$ and $(t_{k})_{k\in \N}$ in $M$ where $t_{k}$ is the first element $t$ in $z_{k}$ such that $t\in O_{2}$ and $s_{k}$ is the predecessor of $t$ in $z_{k}$.    
By sequential completeness, $(s_{k})_{k\in \N}$ and $(t_{k})_{k\in \N}$ have convergent sub-sequences, with the same limit by construction.  
However, if this limit is in $O_{1}$, then so is $(t_{k})_{k\in \N}$ eventually, a contradiction.  Similarly, if this limit is in $O_{2}$, then so is $(s_{k})_{k\in \N}$ eventually, a contradiction.
In each case we obtain a contradiction, i.e.\ $M$ must be connected, and item \eqref{ivt2} follows.   The same proof goes through for item \eqref{ivt3}.

\smallskip

Next, item \eqref{ivt2} implies item \eqref{ivt4} as in case the latter fails for $f:M\di \R$, say with $f(w)<_{\R}f(v)$, then $O_{1}=\{z\in M: f(z)\leq f(w)\}$ and $O_{2}=\{ z\in M: f(z)>f(w) \} $ are open, disjoint, and non-empty sets such that $M=O_{1}\cup O_{2}$, i.e.\ item \eqref{ivt2} fails too.  To show that item \eqref{ivt4} implies $\FC$, suppose the latter is false and let $(M_{0}, d_{3})$ be as above. Define $g:M_{0}\di \R$ as $g(w)=n$ in case $|q_{|w|}|\in [n\pi, (n+1)\pi]$.  
Clearly, $f$ is locally constant but not constant, i.e.\ item~\eqref{ivt4} is false.  
To derive item~\eqref{ivt2} from item~\eqref{ivt4} (and item \eqref{ivt5}), suppose the former is false, i.e.\ $(M, d)$ is a sequentially compact and chain connected metric space that is not connected.
Let $M=O_{1}\cup O_{2}$ be the associated decomposition and note that $f:M\di\R$ defined by $f(w)=1$ if $w\in O_{1}$ and $0$ otherwise, is locally constant (and continuous) but not constant, i.e.\ item \eqref{ivt4} (and \eqref{ivt5}) also fails. 
The equivalence for item \eqref{ivt3} follows in the same way.  

\smallskip

To show that item \eqref{ivt2} implies item \eqref{ivt1}, suppose the latter is false for $f:M\di \R$ and $c\in \R$, i.e.\ $f(w)\ne c$ for all $w\in M$.  
By assumption, $O_{1}:= \{w\in M: f(w)<c\}$ and $O_{2}:= \{w\in M:f(w)>c\}$ are open, disjoint, and non-empty, i.e.\ item~\eqref{ivt2} also fails. 
To show that item \eqref{ivt1} (and item \eqref{ivt25}) implies item \eqref{ivt2}, suppose the latter fails for $M=O_{1}\cup O_{2}$, i.e.\ the latter are open, non-empty, and disjoint. 
Then $f:M\di \R$ defined by $f(w)=1$ if $w\in O_{1}$ and $0$ otherwise, is continuous but does not have the (approximate) intermediate value property.  
\end{proof}
Regarding item \eqref{h2}, we could not find a way of replacing `$f(M)$ is not dense in $\R$' by `$f(M)$ has finite measure'.   
We could study \emph{local connectedness} and obtain similar results, but feel this section is long enough as is.  

\smallskip

In conclusion, we have identified many basic properties of metric spaces that exist in the range of hyperarithmetical analysis. 
We believe there to be many more such principles in e.g.\ topology.  

\section{Functions of Bounded variation and around}\label{BVS}
We introduce functions of bounded variation (Section \ref{deffer}) and show that their basic properties exist in the range of hyperarithmetical analysis (Section~\ref{jefdoetbef}).  
Similar to Theorem \ref{slonk}, we could restrict to arithmetically defined functions.  
\subsection{Bounded variation and variations}\label{deffer}
The notion of \emph{bounded variation} (often abbreviated $BV$) was first explicitly\footnote{Lakatos in \cite{laktose}*{p.\ 148} claims that Jordan did not invent or introduce the notion of bounded variation in \cite{jordel}, but rather discovered it in Dirichlet's 1829 paper \cite{didi3}.} introduced by Jordan around 1881 (\cite{jordel}) yielding a generalisation of Dirichlet's convergence theorems for Fourier series.  
Indeed, Dirichlet's convergence results are restricted to functions that are continuous except at a finite number of points, while $BV$-functions can have infinitely many points of discontinuity, as already studied by Jordan, namely in \cite{jordel}*{p.\ 230}.
In this context, the \emph{total variation} $V_{a}^{b}(f)$ of $f:[a, b]\di \R$ is defined as:
\be\label{tomb}\textstyle
\sup_{a\leq x_{0}< \dots< x_{n}\leq b}\sum_{i=0}^{n} |f(x_{i})-f(x_{i+1})|.
\ee
The following definition provides two ways of defining `$BV$-function'.  We have mostly studied the first one (\cites{dagsamXI, samBIG, samBIG3}) but will use the second one in this paper. 
\bdefi[Variations on variation]\label{varvar}~
\begin{enumerate}  
\renewcommand{\theenumi}{\alph{enumi}}
\item The function $f:[a,b]\di \R$ \emph{has bounded variation} on $[a,b]$ if there is $k_{0}\in \N$ such that $k_{0}\geq \sum_{i=0}^{n} |f(x_{i})-f(x_{i+1})|$ 
for any partition $x_{0}=a <x_{1}< \dots< x_{n-1}<x_{n}=b  $.\label{donp}
\item The function $f:[a,b]\di \R$ \emph{has total variation} $z\in \R$ on $[a,b]$ if $V_{a}^{b}(f)=z$.\label{donp2}
\end{enumerate}
\edefi
We recall the `virtual' or `comparative' meaning of suprema in RM from e.g.\ \cite{simpson2}*{X.1}.  In particular, a formula `$\sup A\leq b$' is merely 
shorthand for (essentially) the well-known definition of the supremum.  

\smallskip

Secondly, the fundamental theorem about $BV$-functions is formulated as follows.
\begin{thm}[Jordan decomposition theorem, \cite{jordel}*{p.\ 229}]\label{drd}
A $BV$-function $f : [0, 1] \di \R$ is the difference of  two non-decreasing functions $g, h:[0,1]\di \R$.
\end{thm}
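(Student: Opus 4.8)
The plan is to run the textbook Jordan construction through the \emph{variation function} and to treat the genuine difficulty---its existence as a third-order object---separately from the (routine) monotonicity bookkeeping. Concretely, writing $V_{0}^{x}(f)$ for the total variation of $f$ on $[0,x]$ in the sense of \eqref{tomb}, I would set $g(x):=V_{0}^{x}(f)$ and $h(x):=g(x)-f(x)=V_{0}^{x}(f)-f(x)$. The identity $f=g-h$ is then immediate, so the theorem reduces to two claims: that the reals $V_{0}^{x}(f)$ exist and assemble into a function $g:[0,1]\di\R$, and that $g$ and $h$ are both non-decreasing.

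The monotonicity is the easy half. For $x\leq y$ I would use the additivity $V_{0}^{y}(f)=V_{0}^{x}(f)+V_{x}^{y}(f)$ (upper bound: refine any partition of $[0,y]$ to contain $x$, which cannot decrease its sum by the triangle inequality, then split it at $x$; lower bound: concatenate partitions of $[0,x]$ and $[x,y]$) together with $V_{x}^{y}(f)\geq 0$ to obtain $g(x)\leq g(y)$. For $h$ the same additivity gives $h(y)-h(x)=V_{x}^{y}(f)-\big(f(y)-f(x)\big)$, and since the trivial partition $\{x,y\}$ already witnesses $V_{x}^{y}(f)\geq |f(y)-f(x)|\geq f(y)-f(x)$ in the comparative sense of suprema recalled after Definition \ref{varvar}, this difference is non-negative. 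None of this needs more than the arithmetic of the (virtual) suprema.

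The hard part will be producing $g$ as an actual function. Here the hypothesis that the total variation is \emph{given} as a real $z=V_{0}^{1}(f)$ (Definition \ref{varvar}\eqref{donp2}) is what keeps us inside the range of hyperarithmetical analysis rather than forcing a $\Sigma_{1}^{1}$-style supremum, and hence the Suslin functional. The key observation I would exploit is that for each $x$ and each $m$ a partition of $[0,1]$ with sum exceeding $z-\frac{1}{2^{m}}$ may be assumed to contain $x$ (inserting $x$ never lowers the sum), and such a partition splits into a partition of $[0,x]$ and one of $[x,1]$ whose sums are bounded by $V_{0}^{x}(f)$ and $V_{x}^{1}(f)$; as these upper bounds add up to $z$, driving the total to $z$ forces the $[0,x]$-piece up to $V_{0}^{x}(f)$. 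A sequence of such near-optimal partitions thus yields rational approximations converging to $V_{0}^{x}(f)$, and $(\exists^{2})$, available in $\ACAo$, evaluates the limit. The remaining, and genuinely delicate, point is to make these choices \emph{uniformly in $x$} so that $x\mapsto g(x)$ is a bona fide third-order function respecting $=_{\R}$; this is precisely where a finite-choice/countable-choice principle such as $\QFAC^{0,1}$ or $\FC$ must be invoked, and it is this step---not the decomposition itself---that pins the theorem between $\QFAC^{0,1}$ and $\FC$ in the manner of the preceding theorems.
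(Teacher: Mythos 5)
Your decomposition is the classical one, $g:=\lambda x.V_{0}^{x}(f)$ and $h:=g-f$, and it is also the one the paper uses; note, though, that the paper offers no proof of Theorem \ref{drd} itself (it is cited as Jordan's classical result), and the relevant comparison is with the derivation of item \eqref{fling} from $\QFAC^{0,1}$ inside Theorem \ref{xion}. There the two arguments genuinely diverge on the only hard step, namely producing $V_{0}^{x}(f)$ as a real uniformly in $x$. The paper first shows that the near-optimal partitions supplied by $\QFAC^{0,1}$ must contain all removable discontinuities of $f$, combines this with the fact ($\ACAo$ plus earlier work) that $BV$-functions are regulated and their jump discontinuities can be enumerated, and then replaces the supremum in \eqref{tomb} by a countable supremum over $\Q$ together with the enumerated discontinuity points, which $(\exists^{2})$ evaluates. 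You instead squeeze $V_{0}^{x}(f)$ directly: insert $x$ into each near-optimal partition of $[0,1]$, split at $x$, and use additivity to see that the $[0,x]$-piece is within $\frac{1}{2^{m}}$ of the virtual supremum. Your route is more self-contained (no appeal to regulatedness or to the classification of discontinuities) and gives $g$ with an explicit modulus of convergence; the paper's route is longer but yields the enumeration of the discontinuity points as a byproduct, which is exactly what drives the equivalence of items \eqref{fling} and \eqref{fling2} and the link to $\cocode_{1}$.

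One point in your last paragraph needs correcting, because in this paper the identity of the choice principle is the whole point. You say the delicate step is to make the choices ``uniformly in $x$'' and that this is where $\QFAC^{0,1}$ or $\FC$ must be invoked. A choice principle indexed by $x\in[0,1]$ is not $\QFAC^{0,1}$ (which is $\N$-indexed) and is neither available nor needed: your own key observation already supplies the uniformity for free, since a \emph{single} application of $\QFAC^{0,1}$ to $(\forall m\in\N)(\exists w)(\sum_{i}|f(w(i))-f(w(i+1))|>z-\frac{1}{2^{m}})$ yields one sequence $(w_{m})_{m\in\N}$ of partitions of the whole interval, and for every $x$ the post hoc insertion of $x$ into $w_{m}$ produces the approximating sums; extensionality of $g$ in $=_{\R}$ then follows from that of $f$. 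Finally, be explicit that your argument proves the version of the theorem in which $V_{0}^{1}(f)$ is \emph{given} as a real (Definition \ref{varvar}.\eqref{donp2}, i.e.\ item \eqref{fling} of Theorem \ref{xion}); for the bare Definition \ref{varvar}.\eqref{donp} reading of ``$BV$-function'' the squeeze has no $z$ to squeeze against, and the paper's remarks on $\cocode_{0}$ indicate that this stronger statement escapes the range of hyperarithmetical analysis altogether.
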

Theorem \ref{drd} has been studied via second-order representations in \cites{groeneberg, kreupel, nieyo, verzengend}.
The same holds for constructive analysis by \cites{briva, varijo,brima, baathetniet}, involving different (but related) constructive enrichments.  
We have obtained many equivalences for the Jordan decomposition theorem, formulated using item \eqref{donp} from Definition \ref{varvar} in \cites{dagsamXI, samBIG3}, involving the following principle.
\begin{princ}[$\cocode_{0}$]
A countable set $A\subset [0,1]$ can be enumerated.
\end{princ}
This principle is `explosive' in that $\ACAo+\cocode_{0}$ proves $\ATR_{0}$ while $\FIVE^{\omega}+\cocode_{0}$ proves $\SIX$ (see \cite{dagsamX}*{\S4}).  

\smallskip

Thirdly, $f:\R\di \R$ is \emph{regulated} if for every $x_{0}$ in the domain, the `left' and `right' limit $f(x_{0}-)=\lim_{x\di x_{0}-}f(x)$ and $f(x_{0}+)=\lim_{x\di x_{0}+}f(x)$ exist.  
Feferman's $\mu$ readily provides the limit of $(f(x+\frac{1}{2^{n}}))_{n\in \N}$ if it exists, i.e.\ the notation $\lambda x. f(x+)$ for regulated $f$ makes sense in $\ACAo$.  
On a historical note, Scheeffer and Darboux study discontinuous regulated functions in \cite{scheeffer, darb} without using the term `regulated', while Bourbaki develops Riemann integration based on regulated functions in \cite{boerbakies}.  
Finally, $BV$-functions are regulated while Weierstrass' `monster' function is a natural example of a regulated function not in $BV$.

\subsection{Bounded variation and hyperarithmetical analysis}\label{jefdoetbef}
We identify a number of statements about $BV$-functions that exist within the range of hyperarithmetical analysis, assuming $\ACAo$.
We even obtain some elegant equivalences and discus the (plentiful) variations of these results in Section \ref{remvar}.  

\smallskip

First of all, the following principle appears to be important, which is just $\cocode_{0}$ from the previous section restricted to strongly countable sets. 
\begin{princ}[$\cocode_{1}$]
A strongly countable set $A\subset [0,1]$ can be enumerated.
\end{princ}
\noindent
Some RM-results for $\cocode_{1}$ may be found in \cite{dagsamXI}*{\S2.2.1}; many variations are possible and these systems all exist in the range of hyperarithmetical analysis. 
The cited results are not that satisfying as they mostly deal with properties of strongly countable sets, in contrast to the below.    

\smallskip

Secondly, we have the following theorem, establishing that items \eqref{fling}-\eqref{dagg} exist in the range of hyperarithmetical analysis. 
\begin{thm}[$\ACAo+\IND_{1}$]\label{xion}
The higher items imply the lower ones.  
\begin{enumerate}
\renewcommand{\theenumi}{\roman{enumi}}
\item The principle $\QFAC^{0,1}$.
\item \(Jordan\) For $f\in BV$ with $V_{0}^{1}(f)=1$, there are non-decreasing $g, h:[0,1]\di \R$ such that $f=g-h$.\label{fling}
\item For $f\in BV$ with $V_{0}^{1}(f)=1$, there is a sequence that includes all points of discontinuity of $f$.  \label{fling2}
\item For $f\in BV$ with $V_{0}^{1}(f)=1$, the supremum\footnote{To be absolutely clear, we assume, for the existence of a functional $\Phi:\Q^{2}\di\R$ such that $(\forall p, q\in \Q\cap [0,1])(\Phi(p, q)=\sup_{x\in [p,q ]}f(x))$). } $\sup_{x\in [p, q]}f(x)$ exists for $p, q\in [0,1]\cap\Q$.  \label{fling3}
\item $\cocode_{1}$.\label{dagg}
\item $\USAC$.
\end{enumerate}
Items \eqref{fling}-\eqref{fling2} are equivalent; we only use $\IND_{1}$ to derive $\cocode_{1}$ from item \eqref{fling3}.
\end{thm}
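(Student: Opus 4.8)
The plan is to establish the listed implications by proving the consecutive steps $\QFAC^{0,1}\Rightarrow\eqref{fling}\Rightarrow\eqref{fling2}\Rightarrow\eqref{fling3}\Rightarrow\cocode_{1}\Rightarrow\USAC$, together with the reverse implication $\eqref{fling2}\Rightarrow\eqref{fling}$ needed for the stated equivalence. All steps are to be carried out in $\ACAo$ except the passage from \eqref{fling3} to $\cocode_{1}$, where $\IND_{1}$ enters.

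For $\QFAC^{0,1}\Rightarrow\eqref{fling}$ I would build the variation function $V(x):=V_{0}^{x}(f)$ directly. Since $V_{0}^{1}(f)=1$ is given, for each $k$ there is a partition of $[0,1]$ of variation sum exceeding $1-2^{-k}$; applying $\QFAC^{0,1}$ and taking unions yields an increasing sequence of partitions $(P_{k})_{k\in\N}$ whose variation sums tend to $1$. For fixed $x$, let $a_{k}(x)$ be the variation of $P_{k}\cup\{x\}$ restricted to $[0,x]$; this is non-decreasing in $k$ and bounded, so $V(x):=\lim_{k}a_{k}(x)$ exists by $(\exists^{2})$. The additivity identity $V_{0}^{x}(f)+V_{x}^{1}(f)=1$, together with $a_{k}(x)\le V_{0}^{x}(f)$ and the complementary bound on $[x,1]$, forces $a_{k}(x)\to V_{0}^{x}(f)$, so $V$ is the genuine variation function. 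Setting $g:=\tfrac12(V+f)$ and $h:=\tfrac12(V-f)$ gives $f=g-h$, and monotonicity of $g,h$ follows from $V(y)-V(x)=V_{x}^{y}(f)\ge|f(y)-f(x)|$ for $x<y$. For $\eqref{fling}\Rightarrow\eqref{fling2}$, the discontinuities of $f$ lie among those of the non-decreasing $g,h$; for non-decreasing $g$ the points $x_{r}:=\sup\{q\in\Q:g(q)<r\}$ ($r\in\Q$) are computable by $(\exists^{2})$ and exhaust the jump points, so merging the two resulting sequences enumerates all discontinuities of $f$. For the converse $\eqref{fling2}\Rightarrow\eqref{fling}$ I would use the enumeration $(d_{k})$ to split off the saltus part: the jump series $f_{s}(x)=\sum_{k}(\dots)$ converges (total jump $\le V_{0}^{1}(f)$) and is computable, giving $f=f_{c}+f_{s}$ with $f_{s}$ a difference of monotone jump functions and $f_{c}$ continuous; the variation function of the continuous $BV$ function $f_{c}$ is a countable supremum over rational partitions, hence exists in $\ACAo$, yielding the Jordan decomposition.

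For $\eqref{fling2}\Rightarrow\eqref{fling3}$, regulatedness of $BV$ functions gives $f(x)=\lim_{\Q\ni r\to x}f(r)$ at every continuity point, so $\sup_{x\in[p,q]}f(x)=\max\big(\sup_{r\in\Q\cap[p,q]}f(r),\ \sup_{k:\,d_{k}\in[p,q]}f(d_{k})\big)$; both are countable suprema defining $\Phi$ uniformly via $(\exists^{2})$. The main work is $\eqref{fling3}\Rightarrow\cocode_{1}$. Given a strongly countable $A\subseteq[0,1]$ with $Y\colon\R\to\N$ injective and onto $A$, I would let $x_{n}$ denote the unique point of $A$ with $Y(x_{n})=n$ and define the spike function $f_{A}(x):=2^{-Y(x)-2}$ for $x\in A$ and $0$ otherwise, pointwise computable from $F_{A},Y$ and $(\exists^{2})$. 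The upper bound $V_{0}^{1}(f_{A})\le 2\sum_{n}2^{-n-2}=1$ is immediate. The lower bound $V_{0}^{1}(f_{A})\ge 1-\eps$ is the crux: here $\IND_{1}$ assembles the unique witnesses $x_{0},\dots,x_{N}$ into a finite sequence, after which a search over rationals (legitimate by $(\exists^{2})$ and the finiteness of the set of low-index points) produces near-zero separators around each $x_{n}$, realising a partition of variation $>1-\eps$. Thus $V_{0}^{1}(f_{A})=1$ and \eqref{fling3} supplies $\Phi$; since $\Phi(p,q)=2^{-m-2}$ with $m=\min\{Y(x):x\in A\cap[p,q]\}$, a recursion that at stage $n$ uses the already-located $x_{0},\dots,x_{n-1}$ to restrict to sub-intervals where the index-$n$ spike is minimal lets bisection locate $x_{n}$, thereby enumerating $A$.

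Finally, $\cocode_{1}\Rightarrow\USAC$ is obtained by coding: given arithmetical $\varphi$ with $(\forall n)(\exists!X)\varphi(n,X)$, the set $A$ of reals coding pairs $(n,X)$ with $\varphi(n,X)$ is strongly countable via $Y=$ first coordinate and has a characteristic function by $(\exists^{2})$, so enumerating $A$ reads off the desired sequence $(X_{n})$. I expect the principal obstacle to be $\eqref{fling3}\Rightarrow\cocode_{1}$, and within it the verification that $V_{0}^{1}(f_{A})=1$ is \emph{provable}: the upper bound is soft, but the lower bound genuinely needs $\IND_{1}$ to collect the unique spikes and needs the finiteness-of-low-index-points observation to find admissible separators without any choice beyond $(\exists^{2})$; the subsequent extraction of the enumeration from $\Phi$ is then a routine bisection.
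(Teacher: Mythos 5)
Your proposal is correct, and for the lower half of the chain it coincides with the paper's own argument: the passage from the supremum functional to $\cocode_{1}$ uses the same spike function $x\mapsto 2^{-Y(x)-2}$ (your normalisation is in fact tidier than the paper's $2^{-Y(x)}$, for which $V_{0}^{1}(f)=1$ only holds under the intended rescaling), with $\IND_{1}$ invoked exactly where you place it, namely to assemble the finitely many unique witnesses for the lower bound on the variation; the recovery of the enumeration by interval-halving on $\Phi$ and the coding argument for $\cocode_{1}\Rightarrow\USAC$ are likewise the paper's. The genuine difference is at the top of the chain. The paper proves $\QFAC^{0,1}\Rightarrow$ (iii) directly: it first notes that the jump discontinuities of the regulated function $f$ are enumerable in $\ACAo$ (citing external results on regulated functions), then applies $\QFAC^{0,1}$ to near-optimal partitions and argues that any removable discontinuity missed by them would force $V_{0}^{1}(f)>1$; item (ii) is then obtained from (iii) by the one-line observation that the supremum defining $V_{0}^{x}(f)$ may be taken over $\Q$ together with the enumerated discontinuities, hence exists via $(\exists^{2})$. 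You instead prove $\QFAC^{0,1}\Rightarrow$ (ii) directly, constructing $V_{0}^{x}(f)$ as the limit of the partial variations along a nested sequence of near-optimal partitions and using $V_{0}^{x}(f)+V_{x}^{1}(f)=V_{0}^{1}(f)=1$ to see that the limit is the true supremum --- a self-contained argument that avoids any appeal to regulatedness at that stage --- and you route (iii) $\Rightarrow$ (ii) through the saltus decomposition $f=f_{c}+f_{s}$ rather than the paper's countable-supremum shortcut. Both roads are sound and yield the same intermediate placement; the paper's is shorter because the equivalence of (ii) and (iii) does double duty, while yours buys an explicit, citation-free construction of the variation function from $\QFAC^{0,1}$.
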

\begin{proof}
Assume $\QFAC^{0,1}$ and let $f\in BV$ be such that $V_{0}^{1}(f)=1$.  
By \cite{dagsamXIV}*{Theorem~2.16}, $\ACAo$ suffices to enumerate all jump discontinuities of a regulated function, while $f$ is regulated by \cite{dagsamXI}*{Theorem 3.33}.
Then $V_{0}^{1}(f)=1$ implies that
\[\textstyle
(\forall k\in \N)(\exists x_{0}, \dots, x_{m}\in I)\big[(\forall i<m)(x_{i}<x_{i+1})\wedge 1-\frac{1}{2^{k}}< \sum_{j=0}^{m}|f(x_{j})-f(x_{j+1})|\big].
\]
The formula in square brackets is arithmetical, i.e.\ since $(\exists^{2})$ is available we may apply $\QFAC^{0,1}$ to obtain a sequence of finite sequences $(w_{n})_{n\in \N}$ witnessing the previous 
centred formula.  This sequence includes all removable discontinuities of $f$.  Indeed, suppose $y_{0}\in [0,1]$ is such that $f(y_{0}-)=f(y_{0}+)\ne f(y_{0})$ is not among the reals in $(w_{n})_{n\in \N}$.  Let $k_{0}\in \N$ be such that $|f(y_{0}+)-f(y_{0})|>\frac{1}{2^{k_{0}}}$ and note that $1-\frac{1}{2^{k_{0}+1}}< \sum_{j=0}^{m_{k_{0}+1}}|f(x_{j})-f(x_{j+1})|$ for $w_{k_{0}+1}=(x_{0}, \dots, x_{m_{k_{0}+1}})$ by assumption. Extending $w_{k_{0}+1}$ with $y_{0}$ and points $z_{0}<y_{0}<u_{0}$ close enough to $y_{0}$, we obtain a partition of $[0,1]$ that witnesses that $V_{0}^{1}(f)>1$, contradicting our assumptions.  Since $f$ is regulated, it only has removable and jump discontinuities, i.e.\ item \eqref{fling2} follows from $\QFAC^{0,1}$ as required.  

\smallskip

By \cite{dagsamXI}*{Theorem 3.33}, $\ACAo$ suffices to enumerate the points of discontinuity of any monotone $g:[0,1]\di \R$, i.e.\ item \eqref{fling} implies item \eqref{fling2}.
To obtain item \eqref{fling} from item \eqref{fling2}, note that the supremum over $\R$ in \eqref{tomb} can be replaced by a supremum over $\Q$ and any sequence that includes all points of discontinuity of $f$.  
Hence, we may use $(\exists^{2})$ to define the weakly increasing function $g(x):= \lambda x.V_{0}^{x}(f) $.  One readily verifies that $h(x):= g(x)-f(x)$ is also weakly increasing, i.e.\ $f=g-h$ as in item \eqref{fling} follows.  To obtain item \eqref{fling3} from item \eqref{fling2}, note that -similar to the previous- the supremum over $\R$ in $\sup_{x\in [p, q]}f(x)$ can be replaced by a supremum over $\Q$ and any sequence that includes all points of discontinuity of $f$.  

\smallskip

To derive $\cocode_{1}$ from item \eqref{fling3}, let $A\subset [0,1]$ and $Y:[0,1]\di \R$ such that the latter is injective and surjective on the former.  
Now define $f:[0,1]\di \R$ as follows: $f(x):= \frac{1}{2^{Y(x)}}$ if $x\in A$, and $0$ otherwise.  Using $\IND_{1}$, $f$ is in $BV$ and $V_{0}^{1}(f)=1$.  
Now use $(\exists^{2})$ to decide whether $\sup_{x\in [0, \frac{1}{2}]}f(x)< 1$; if the latter holds, `$1$' is the first bit of the binary expansion of $x_{0}\in [0,1]$ such that $Y(x_{0})=0$.  
Using the supremum functional and $(\exists^{2})$, the usual interval-halving technique then allows us to enumerate $A$, as required for $\cocode_{1}$. 
For the final part, let $\varphi$ be arithmetical and such that $(\forall n\in \N)(\exists! X\subset \N )\varphi(n, X)$. 
Use $\exists^{2}$ to define $\eta:[0,1]\di (2^{\N}\times 2^{\N})$ such that $\eta(x)=(f, g)$ outputs the binary expansions of $x$, taking $f=g$ if there is only one.  
Then $E_{n}=\{x\in [0,1]:\varphi(n,\eta(x)(1))\vee \varphi(n, \eta(x)(2))\}$ is a singleton and $Y(x):=(\mu n)(x\in E_{n})$ is injective and surjective on $A=\cup_{n\in \N}E_{n}$.  
The enumeration of $A$ provided by $\cocode_{1}$ yields the consequent of $\USAC$. 
\end{proof}
As to the role of the Axiom of Choice in Theorem \ref{xion}, we note that the items \eqref{fling}-\eqref{dagg} can also be proved \emph{without} $\QFAC^{0,1}$.
Indeed, $\lambda x.V_{0}^{x}(f)$ as in \eqref{tomb} involves a supremum over $\R$, which can be defined in $\Z_{2}^{\Omega}$ using the well-known interval-halving technique, i.e.\ the 
usual textbook proof (see e.g.\ \cite{voordedorst}) goes through in $\Z_{2}^{\Omega}$.  

\smallskip

Thirdly, we have the following corollary using slightly more induction.
\begin{cor}~
Over $\ACAo+\IND_{2}$, item \eqref{fling2} from Theorem \ref{xion} is equivalent to:
\begin{center}
 for $f\in BV$ with $V_{0}^{1}(f)=1$ and with Fourier coefficients given, there is a sequence $(x_{n})_{n\in \N}$ outside of which the Fourier series converges to $f(x)$.  \label{fling4}
\end{center} 
\end{cor}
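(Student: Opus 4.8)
The plan is to prove the equivalence by showing that the new Fourier-convergence statement is logically sandwiched between item \eqref{fling2} and its consequences already established in Theorem \ref{xion}. The key classical input is \emph{Jordan's test} (the Dirichlet--Jordan theorem): for a $BV$-function $f$ with given Fourier coefficients, the Fourier series converges at every point $x_{0}$ to $\tfrac{1}{2}\big(f(x_{0}-)+f(x_{0}+)\big)$, hence in particular it converges to $f(x_{0})$ at every point of continuity of $f$. Thus the exceptional set where the Fourier series fails to converge to $f(x)$ is contained in the set of discontinuities of $f$ together with the removable discontinuities, and this is exactly the kind of set that item \eqref{fling2} enumerates.

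First I would derive the Fourier-convergence statement from item \eqref{fling2}. Given $f\in BV$ with $V_{0}^{1}(f)=1$ and its Fourier coefficients, apply item \eqref{fling2} to obtain a sequence $(x_{n})_{n\in \N}$ including all points of discontinuity of $f$. I would then verify, using $(\exists^{2})$ to form the partial sums and the classical Jordan test reproduced arithmetically, that at every $x$ \emph{not} in $(x_{n})_{n\in \N}$ the function $f$ is continuous, so the partial sums $S_{N}(f)(x)$ converge to $\tfrac{1}{2}(f(x-)+f(x+))=f(x)$. Here is where $\IND_{2}$ enters: proving the uniform-in-$N$ estimates underlying Jordan's test (the Dirichlet-kernel bound $\big|\int_{0}^{\pi} (f(x+t)+f(x-t)-2f(x))\,D_{N}(t)\,dt\big|$ controlled by the local variation) requires splitting the integral into finitely many pieces determined by the variation, and assembling such a finite sequence of pieces is precisely a $\Pi_{1}^{0}$-bounded existence statement of the form handled by $\IND_{2}$ rather than $\IND_{1}$.

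Conversely I would derive item \eqref{fling2} from the Fourier statement. The point is that the convergence sequence $(x_{n})_{n\in \N}$ provided by the new principle already \emph{captures} every non-removable discontinuity of $f$: at a genuine jump $x_{0}$ one has $f(x_{0}-)\ne f(x_{0}+)$, so the Fourier series converges there to $\tfrac{1}{2}(f(x_{0}-)+f(x_{0}+))$, which differs from $f(x_{0})$ unless $f(x_{0})$ happens to equal the midpoint. To handle the residual cases — jumps where $f(x_{0})$ equals the midpoint, and removable discontinuities — I would run the total-variation argument from the proof of Theorem \ref{xion} (item \eqref{fling2} from $\QFAC^{0,1}$): using $V_{0}^{1}(f)=1$ and $(\exists^{2})$, any discontinuity contributes a definite amount to the variation, so the finitely-many-at-each-scale structure lets me enumerate the discontinuities directly and merge this enumeration with $(x_{n})_{n\in \N}$ into a single sequence containing \emph{all} points of discontinuity, again invoking $\IND_{2}$ to form the relevant finite partitions.

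The main obstacle I expect is the faithful formalisation of Jordan's test inside $\ACAo+\IND_{2}$: the classical proof estimates $S_{N}(f)(x)-\tfrac{1}{2}(f(x-)+f(x+))$ via the second mean-value theorem for integrals applied to the monotone pieces of $f$, and one must check that every object involved (the Dirichlet kernel, the partial sums as a functional of the Fourier coefficients, and the monotone decomposition $f=g-h$ coming from item \eqref{fling}) is definable using $(\exists^{2})$ and that the only combinatorial step needing induction — selecting the finitely many breakpoints of the variation at each error scale $\tfrac{1}{2^{k}}$ — is genuinely an instance of $\IND_{2}$. Once that bookkeeping is in place, both directions reduce to the variation-counting argument already carried out in Theorem \ref{xion}, and the stated equivalence over $\ACAo+\IND_{2}$ follows.
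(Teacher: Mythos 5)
Your forward direction matches the paper's: given the enumeration of all discontinuities from item \eqref{fling2}, the Dirichlet--Jordan theorem (the Fourier series of a $BV$-function converges everywhere to $\frac{f(x+)+f(x-)}{2}$) yields convergence to $f(x)$ off that sequence. The paper simply cites this convergence fact as provable in $\ACAo$ once the Fourier coefficients are given, whereas you propose to reprove it from scratch via Dirichlet-kernel estimates; that is harmless but unnecessary, and note that the paper locates the use of $\IND_{2}$ elsewhere, namely in guaranteeing that $BV$-functions are regulated.

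The reversal is where you have a genuine gap. First, the bookkeeping is backwards: the exceptional sequence supplied by the Fourier statement automatically contains every \emph{removable} discontinuity (there $f(x_{0}+)=f(x_{0}-)\ne f(x_{0})$, so the series converges to the common one-sided limit rather than to $f(x_{0})$), while it is the \emph{jump} discontinuities --- in particular those with $f(x_{0})=\frac{f(x_{0}+)+f(x_{0}-)}{2}$, where the series does converge to $f(x_{0})$ --- that may be missed entirely. Second, and more seriously, your proposed repair for the missed points is to ``run the total-variation argument from the proof of Theorem \ref{xion}''; but that argument derives the enumeration from $\QFAC^{0,1}$, which is precisely the principle you cannot assume in this direction (otherwise the claimed equivalence over $\ACAo+\IND_{2}$ is vacuous), and there is no choice-free version of it for removable discontinuities. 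The missing ingredient is the fact, cited in the paper from earlier work, that $\ACAo$ alone suffices to enumerate all \emph{jump} discontinuities of a regulated function; merging that enumeration with the Fourier exceptional sequence, and using that a regulated function has no discontinuities other than removable and jump ones, yields item \eqref{fling2}. Without that choice-free enumeration of jumps your reversal does not go through.
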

\begin{proof}
We note that $\IND_{2}$ suffices to guarantee that $BV$-functions are regulated by \cite{dagsamXI}*{Theorem 3.33}.
Now, the Fourier series of a $BV$-function always converges to $\frac{f(x+)+f(x-)}{2}$ \emph{and} this fact is provable in $\ACAo$ if the Fourier coefficients are given, as discussed in (a lot of detail in) \cite{samBIG}*{\S3.4.4}.  Hence, item \eqref{fling2} of Theorem \ref{xion} immediately implies the centred statement in item (a), while for the reversal, the centred statement provides a sequence that includes all removable discontinuities, i.e.\ where $f(x)\ne f(x+)$ but $f(x+)=f(x-)$.  
By \cite{dagsamXIV}*{Theorem~2.16}, $\ACAo$ suffices to enumerate all jump discontinuities of a regulated function.   
Since there are no other discontinuities for $f$, the corollary follows.
\end{proof}
We could obtain similar results for e.g.\ Bernstein or Hermit-Fejer polynomials as analogous results hold for $BV$-functions (see \cite{samBIG3}). 
Other variations are discussed in Remark \ref{remvar} below. 

\smallskip

Fifth, as noted in Section \ref{deffer}, enumerating the points of discontinuity of a regulated function implies $\cocode_{0}$; the latter yields $\ATR_{0}$ when combined with $\ACAo$.  
By contrast, item \eqref{fling2222} in the following theorem is much weaker.  
\begin{thm}[$\ACAo+\IND_{0}$]\label{dinggong}
The higher items imply the lower ones.  
\begin{enumerate}
\renewcommand{\theenumi}{\roman{enumi}}
\item The principle $\QFAC^{0,1}$.
\item For regulated $f:[0,1]\di \R$ with infinite $D_{f}$, there is a sequence of distinct points of discontinuity of $f$.  \label{fling2222}
\item The principle $\FC$. 
\item The principle $\FSAC$.
\end{enumerate}
\end{thm}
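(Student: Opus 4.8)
The plan is to prove the three implications (i)$\Rightarrow$(ii), (ii)$\Rightarrow$(iii), and (iii)$\Rightarrow$(iv), where the last is immediate since $\FC$ implies $\FSAC$ over $\ACAo$, as already noted below Principle~\ref{FCC}. Throughout, write $D_{f}$ for the set of points of discontinuity of $f$.

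For (i)$\Rightarrow$(ii), fix regulated $f:[0,1]\di\R$ with $D_{f}$ infinite. Since $(\exists^{2})$ is available, Feferman's $\mu$ computes $f(x+)$ and $f(x-)$, so the predicate `$x\in D_{f}$', i.e.\ `$f(x+)\ne f(x)\vee f(x-)\ne f(x)$', is arithmetical in $f$. Coding finite tuples of reals as single elements of Baire space, let $Y(g,n)=0$ express that $g$ codes an $(n{+}1)$-tuple of \emph{pairwise distinct} elements of $D_{f}$; this is again arithmetical given $(\exists^{2})$. Infiniteness of $D_{f}$ yields $(\forall n)(\exists g)(Y(g,n)=0)$, so $\QFAC^{0,1}$ provides a sequence $(g_{n})_{n\in\N}$ of such tuples. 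I would then flatten $(g_{n})_{n}$ into a single sequence of \emph{pairwise distinct} discontinuities by a greedy recursion: having chosen $y_{0},\dots,y_{m-1}$, the tuple $g_{m}$ lists $m{+}1$ distinct points, so by the finite pigeonhole principle at least one entry avoids the $m$ forbidden values; using $(\exists^{2})$ to decide real equality, let $y_{m}$ be the first such entry. This recursion is arithmetical and yields the desired sequence.

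For (ii)$\Rightarrow$(iii), I would argue by contradiction as in the proof of Theorem~\ref{klonz}: fix non-empty finite $(X_{n})_{n\in\N}$ in $[0,1]$ admitting no choice sequence, and recall $M_{0}$ from \eqref{MZ}, the set of finite sequences $w$ with $w(i)\in X_{i}$ for $i<|w|$. For each $\ell$ there are only finitely many $w\in M_{0}$ with $|w|=\ell$, so I would place their codes in pairwise-disjoint subintervals $I_{\ell}=[1-2^{-\ell},1-2^{-\ell-1})$ accumulating at $1$, and define $f:[0,1]\di\R$ by $f(y):=\tfrac{1}{2^{\ell}}$ when $y$ codes some $w\in M_{0}$ with $|w|=\ell$, and $f(y):=0$ otherwise; this $f$ is definable from $(X_{n})_{n}$ using $(\exists^{2})$ \emph{without} choosing any elements. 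Since each $I_{\ell}$ contains only finitely many spikes and the heights $\tfrac{1}{2^{\ell}}$ tend to $0$ as $y\di 1$, the one-sided limits of $f$ are everywhere $0$, so $f$ is regulated and $D_{f}$ is exactly its set of spike points. The decisive step is the extraction: a sequence $(y_{m})_{m}$ of \emph{distinct} discontinuities decodes, via $(\exists^{2})$, into distinct $w_{m}\in M_{0}$; as only finitely many $w$ share a fixed length, the lengths $|w_{m}|$ must be unbounded, and then for each $i$ I would use $\mu$ to find some $m$ with $|w_{m}|>i$ and set $c(i):=w_{m}(i)\in X_{i}$, producing a choice sequence for $(X_{n})_{n}$ and contradicting our assumption. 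Hence $\FC$ follows, and $\FSAC$ with it.

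The main obstacle — and the reason the base theory is $\ACAo+\IND_{0}$ — is establishing that $D_{f}$ is genuinely \emph{infinite}, so that item~\eqref{fling2222} can be invoked at all. This amounts to producing, uniformly in $\ell$, a partial choice $w\in M_{0}$ of length $\ell$ out of the hypotheses $(\forall i)(\exists x\in X_{i})$, which is precisely a bounded-collection step; carried out naively (as in Theorem~\ref{klonz}) it consumes the full strength of $\IND_{2}$, since the witnesses $x\in X_{i}$ are not unique. I therefore expect the genuinely delicate point to be squeezing this collection into the weaker $\IND_{0}$, which I anticipate requires encoding the finite sets $X_{i}$ through binary expansions in the manner of the proof of Theorem~\ref{slonk}, so that the data collected at each level becomes canonical and the `at most one' form of the induction axiom applies. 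By contrast, verifying regulatedness of $f$ near the accumulation point $1$ is routine by comparison with the vanishing spike heights, and the choice-extraction above is the clean conceptual core of the reversal.
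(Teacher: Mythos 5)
Your proposal is correct and follows essentially the same route as the paper: the key reversal (ii)$\Rightarrow$(iii) is obtained in both cases by encoding the finite partial choice sequences from $M_{0}$ as reals accumulating at $1$ and defining a regulated ``spike'' function whose discontinuity set consists exactly of these codes, so that a sequence of distinct discontinuities yields arbitrarily long partial choices and hence a choice sequence, while (i)$\Rightarrow$(ii) is likewise the direct application of $\QFAC^{0,1}$ to ``$D_{f}$ is not finite''. Your closing worry about the induction needed to show $D_{f}$ is infinite is well taken but not resolved differently in the paper: its proof invokes $\IND_{2}$ for precisely this step (despite the $\IND_{0}$ in the theorem header), so no $\IND_{0}$-only argument is supplied there either.
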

\begin{proof}
The first downward implication is immediate by applying $\QFAC^{0,1}$ -modulo $(\exists^{2})$- to `$D_{f}$ is not finite'.  
The final implication is straightforward.  
For the second downward implication, let $(X_{n})_{n\in \N}$ be a sequence of non-empty finite sets and let $\eta:[0,1]\di \R$ be such that $\eta(x)$ is the binary expansion of $x$, choosing a tail of zeros if necessary.  
Define $h:[0,1]\di \R$ as:  
\[
h(x):=
\begin{cases}
\frac{1}{2^{n}} & \textup{ if $\eta(x)=\underbrace{11\dots 11}_{\textup{$k+1$-times}}*\langle 0 \rangle* g_{0}\oplus\dots \oplus g_{k}$ and $ (\forall i\leq k)(g_{i}\in X_{i})
$}\\
0 & \textup{otherwise} 
\end{cases}.
\]
Using $\IND_{2}$, one readily shows that $h$ is regulated (with left and right limits equal to zero) and that $D_{h}$ is infinite if $\cup_{n\in \N}X_{n}$ is.  Any sequence in $D_{h}$ then yields a sequence as in the consequent of $\FC$. 
\end{proof}
An interesting variation is provided by the following corollary.   We conjecture that $\FC$ cannot be obtained from the second item. 
\begin{cor}[$\ACAo$]
The higher items imply the lower ones.  
\begin{enumerate}
\renewcommand{\theenumi}{\roman{enumi}}
\item The principle $\QFAC^{0,1}$.
\item For $f:[0,1]\di \R$ in $BV$ with infinite $D_{f}$, there is a sequence of distinct points of discontinuity of $f$.  \label{fling22223}
\item \($\FC'$\) Let $(X_{n})_{n\in \N}$ be a sequence of non-empty finite sets in $[0,1]$ and let $g\in \N^{\N}$ be such that $|X_{n}|\leq g(n)$.  Then there is a sequence $(x_{n})_{n\in \N}$ such that $x_n\in X_{n}$ for all $n\in \N$.\label{drugz}
\item The principle $\WFSAC$.
\end{enumerate}
\end{cor}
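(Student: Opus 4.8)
The plan is to prove the three downward implications (i)$\to$(ii), (ii)$\to$(iii) and (iii)$\to$(iv) in turn, following the template of Theorem~\ref{dinggong} but systematically using the height function $g$ to replace each appeal to induction by an \emph{explicit} finite bound computable from $g$; in this way ordinary arithmetical induction, available in $\ACAo$, suffices throughout.

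The implication (i)$\to$(ii) is exactly the first downward implication of Theorem~\ref{dinggong}. Since a $BV$-function is regulated by \cite{dagsamXI}*{Theorem 3.33}, Feferman's $\mu$ computes the one-sided limits $f(x\pm)$, so `$x\in D_{f}$' is arithmetical and `$D_{f}$ is infinite' asserts, for each $n$, the existence of a real coding $n$ distinct points of discontinuity. Applying $\QFAC^{0,1}$ modulo $(\exists^{2})$ yields a sequence of such finite tuples, out of which $(\exists^{2})$ extracts an injective sequence of discontinuities by discarding repetitions; infinitude of $D_{f}$ keeps the search alive.

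The core of the argument is (ii)$\to$(iii). Given $(X_{n})_{n\in\N}$ and $g$ with $|X_{n}|\le g(n)$, I mimic the function of Theorem~\ref{dinggong} but weight its jumps by $g$. Using the binary-expansion functional $\eta$ (available via $(\exists^{2})$), call a real $x$ an \emph{encoding point of depth $k$} when $\eta(x)=1^{k+1}*\langle 0\rangle*(g_{0}\oplus\cdots\oplus g_{k})$ with $g_{i}\in X_{i}$ for all $i\le k$, and set
\[
h(x):=\tfrac{1}{2^{k}\prod_{i\le k}(g(i)+1)}
\]
at depth-$k$ encoding points and $h(x):=0$ otherwise. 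Each $X_{0},\dots,X_{k}$ being non-empty, a depth-$k$ encoding point exists for every $k$, and the leading block $1^{k+1}$ separates the depths, so $D_{h}$ contains a discontinuity of each depth and is infinite; moreover $h$ is regulated with vanishing one-sided limits, since only finitely many encoding points have depth $\le K$ and the remaining values are below $2^{-K}$. The new ingredient is that $h\in BV$ \emph{with an a priori bound}: for any partition $\sum_{j}|h(x_{j})-h(x_{j+1})|\le 2\sum_{j}h(x_{j})$, and since there are at most $\prod_{i\le k}(g(i)+1)$ depth-$k$ encoding points in all, the depth-$k$ contribution is at most $2^{-k}$, whence every partition sum is below $4$. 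Feeding $h$ into item (ii) returns an injective sequence $(y_{m})_{m\in\N}$ of discontinuities; as $g$ bounds the number of depth-$\le K$ encoding points by the fixed natural number $\sum_{k\le K}\prod_{i\le k}(g(i)+1)$, such a sequence cannot have bounded depth (bounded pigeonhole, arithmetical), so for each $n$ I take $m_{n}:=\mu m[\mathrm{depth}(y_{m})\ge n]$, read off the $n$-th component $g_{n}$ of the underlying tuple, and let $x_{n}$ be the real it codes; then $x_{n}\in X_{n}$, which is the choice sequence demanded by $\FC'$.

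Finally (iii)$\to$(iv) is routine. For arithmetical $\varphi$ with, for each $n$, exactly $k_{n}+1\le g(n)$ subsets $X\subset\N$ satisfying $\varphi(n,X)$, use $(\exists^{2})$ to form the finite set $\tilde{X}_{n}\subset[0,1]$ of reals whose binary expansions code such $X$; then $\tilde{X}_{n}$ is non-empty with $|\tilde{X}_{n}|=k_{n}+1\le g(n)$, so $\FC'$ applied to $(\tilde{X}_{n})_{n\in\N}$ with the same $g$ yields $(x_{n})_{n\in\N}$ with $x_{n}\in\tilde{X}_{n}$, and decoding $x_{n}$ to $X_{n}\subset\N$ gives the witnessing sequence for $\WFSAC$. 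The main obstacle is the $BV$-verification in (ii)$\to$(iii): one must choose the jump weights so that the total variation is dominated by a constant computable from $g$ alone, for otherwise proving `$h\in BV$' and the unbounded-depth pigeonhole would require induction over the non-arithmetical failure of $\FC'$ — exactly the induction that the height function is designed to remove.
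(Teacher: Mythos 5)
Your proof follows essentially the same route as the paper's: apply $\QFAC^{0,1}$ to the infinitude of $D_{f}$ for (i)$\to$(ii), build the binary-encoding function $h$ with $g$-weighted jumps and feed it to item (ii) for (ii)$\to$(iii), and use the standard coding of arithmetical singleton-like classes for (iii)$\to$(iv). The one substantive difference is your jump weight $\frac{1}{2^{k}\prod_{i\le k}(g(i)+1)}$: the paper only divides $\frac{1}{2^{k}}$ by $g(k)+1$, but since the number of depth-$k$ encoding points is $\prod_{i\le k}|X_{i}|$ rather than $|X_{k}|$, your normalisation is the one that actually delivers the advertised uniform bound on $V_{0}^{1}(h)$ (and hence the $\FC'$-style pigeonhole without non-arithmetical induction), so this is a welcome tightening of the paper's construction rather than a departure from it.
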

\begin{proof}
The final implication is straightforward while the first one follows as in the proof of the theorem.  
For the second downward implication, let $(X_{n})_{n\in \N}$ be a sequence of non-empty finite sets with $|X_{n}|\leq g(n)$.  
Define $h:[0,1]\di \R$ as in the proof of the theorem but replacing `$\frac{1}{2^{n}}$' in the first case by $\frac{1}{2^{n}}\frac{1}{g(n)+1}$.  
By construction, $h$ is in $BV$ with $V_{0}^{1}(f)\leq 1$ and the set $D_{h}$ is infinite if $\cup_{n\in \N}X_{n}$ is.  Any sequence in $D_{h}$ then yields the sequence as in the consequent of $\FC'$.
\end{proof}
Finally, we discuss numerous possible variations of the above results in Section~\ref{remvar}, including Riemann integration and rectifiability.  

\section{Other topics in hyperarithmetical analysis}\label{fotehr}

\subsection{Semi-continuity and closed sets}\label{evelinenef}
We show that basic properties of semi-continuous functions, like the extreme value theorem, exist in the range of hyperarithmetical analysis.  
Since upper semi-continuous functions are closely related to closed sets, the latter also feature prominently. 

\smallskip

First of all, we need Baire's notion of semi-continuity first introduced in \cite{beren}.
\bdefi\label{flung} 
For $f:[0,1]\di \R$, we have the following definitions:
\begin{itemize}
\item $f$ is \emph{upper semi-continuous} at $x_{0}\in [0,1]$ if for any $k\in \N$, there is $N\in \N$ such that $(\forall y\in B(x_{0}, \frac{1}{2^{N}}))( f(y)< f(x_{0})+\frac{1}{2^{k}} )$. 
\item $f$ is \emph{lower semi-continuous} at $x_{0}\in [0,1]$ if for any $k\in \N$, there is $N\in \N$ such that $(\forall y\in B(x_{0}, \frac{1}{2^{N}}))( f(y)> f(x_{0})-\frac{1}{2^{k}} )$. 
\end{itemize}
\edefi
We use the common abbreviations `usco' and `lsco' for the previous notions.  We say that `$f:[0,1]\di \R$ is usco' if $f$ is usco at every $x\in [0,1]$.  
Following \cite{martino}, the extreme value theorem does not really generalise beyond semi-continuous functions.  

\smallskip

Secondly, we have the following theorem, a weaker version of which is in \cite{dagsamXVI}. 
We repeat that since the characteristic function of a closed set is usco, the connection between items \eqref{zokp} and $\CLC$ is not that surprising.  
\begin{thm}[$\ACAo+\IND_{2}$]\label{konk5}
The higher items imply the lower ones.  
\begin{enumerate}
\renewcommand{\theenumi}{\roman{enumi}}
\item The principle $\QFAC^{0,1}$.
\item \(Extreme value theorem\) For usco $f:\R\di \R$ with $y=\sup_{x\in [n, n+1]}f(x)$ for all $n\in \N$, there is $(x_{n})_{n\in \N}$ such that $(\forall n\in \N)(x_{n}\in [n, n+1]\wedge f(x_{n})=y)$.\label{zokp}  
\item  \(\CLC, \cite{dagsamXVI}\) Let $(C_{n})_{n\in \N}$ be a sequence of non-empty closed sets in $[0,1]$.  Then there is $(x_{n})_{n\in \N}$ such that $x_n\in C_{n}$ for all $n\in \N$.
\item For usco and regulated $f:\R\di \R$ with $y=\sup_{x\in [n, n+1]}f(x)$ for all $n\in \N$, there is $(x_{n})_{n\in \N}$ such that $(\forall n\in \N)(x_{n}\in [n, n+1]\wedge f(x_{n})=y)$.\label{zokp2}  
\item  \(\FC\) Let $(X_{n})_{n\in \N}$ be a sequence of non-empty finite sets in $[0,1]$.  Then there is $(x_{n})_{n\in \N}$ such that $x_n\in X_{n}$ for all $n\in \N$.
\item The principle $\FSAC$.
\end{enumerate}
\end{thm}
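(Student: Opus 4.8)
The plan is to establish the chain of implications from top to bottom, i.e.\ from $\QFAC^{0,1}$ through item \eqref{zokp}, then $\CLC$, then item \eqref{zokp2}, then $\FC$, down to $\FSAC$, reusing the proof template of Theorems \ref{klonz}--\ref{preppiu}: one kind of step extracts a sequence of \emph{approximate} maximisers via a choice principle and then sharpens it to an \emph{exact} maximiser using sequential compactness and upper semi-continuity, while the other kind encodes a sequence of (closed or finite) sets as the unit level set of a suitable \usco{} function. For $\QFAC^{0,1}\di\eqref{zokp}$, I would first unfold the hypothesis $y=\sup_{x\in[n,n+1]}f(x)$ into approachability $(\forall n,m\in\N)(\exists x\in[n,n+1])(f(x)>y-\frac{1}{2^{m}})$; since reals are coded in Baire space and $(\exists^{2})$ decides $f(x)>y-\frac{1}{2^{m}}$, one applies $\QFAC^{0,1}$ along the pairing of $(n,m)$ to obtain a double sequence $(x_{n,m})_{n,m}$. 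For each fixed $n$, Bolzano--Weierstrass (available in $\ACAo$) yields a convergent subsequence of $(x_{n,m})_{m}$ with limit $x_{n}^{*}\in[n,n+1]$, and upper semi-continuity at $x_{n}^{*}$ forces $f(x_{n}^{*})\geq y$, hence $f(x_{n}^{*})=y$; as the whole construction is arithmetical in $(x_{n,m})_{n,m}$, the sequence $(x_{n}^{*})_{n}$ is obtained uniformly, without further choice.

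For $\eqref{zokp}\di\CLC$, given non-empty closed $C_{n}\subset[0,1]$ I would transport each $C_{n}$ by the affine map $\phi_{n}(t)=n+\frac13+\frac{t}{3}$ into the interior subinterval $[n+\frac13,n+\frac23]$, and set $f(x)=1$ when $x\in\phi_{n}(C_{n})$ for the relevant $n$ and $f(x)=0$ otherwise; using $(\exists^{2})$ this $f:\R\di\R$ is a genuine functional. The characteristic function of a closed set is \usco, the margins $\frac13$ keep the copies separated so that every $0$-point has a neighbourhood on which $f\equiv 0$, and $\sup_{x\in[n,n+1]}f(x)=1=:y$ since $C_{n}\neq\emptyset$; applying \eqref{zokp} returns $x_{n}$ with $f(x_{n})=1$, whence $\phi_{n}^{-1}(x_{n})\in C_{n}$ is the desired choice sequence.

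For $\CLC\di\eqref{zokp2}$, given \usco{} and regulated $f$ with $\sup_{x\in[n,n+1]}f(x)=y$, I would apply $\CLC$ to the closed sets $C_{n,m}:=\{x\in[n,n+1]:f(x)\geq y-\frac{1}{2^{m}}\}$ (closed by upper semi-continuity, non-empty by approachability), reindexed along the pairing of $(n,m)$, to obtain approximants $x_{n,m}$; as in the first step, Bolzano--Weierstrass plus upper semi-continuity extract exact maximisers $x_{n}^{*}$ with $f(x_{n}^{*})=y$. Regularity is not essential here and this argument in fact yields \eqref{zokp} as well; I keep the regulated formulation because the next step produces a regulated function. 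For $\eqref{zokp2}\di\FC$, I would repeat the encoding of the second step with finite $X_{n}$ in place of $C_{n}$: the resulting $f$ is \usco{} and, since each $[n,n+1]$ carries only finitely many spikes with one-sided limits $0$ (handled with $\IND_{2}$ as in Theorems \ref{klonz}--\ref{preppiu}), also regulated, with $\sup_{x\in[n,n+1]}f(x)=1$; item \eqref{zokp2} then selects $x_{n}\in\phi_{n}(X_{n})$, giving the $\FC$-choice. Finally $\FC\di\FSAC$ over $\ACAo$ is already noted before Principle \ref{FCC}: code each $X\subset\N$ as a real via its binary expansion, so that the finitely many $X$ with $\varphi(n,X)$ form a non-empty finite set of reals to which $\FC$ applies.

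The main obstacle is the recurring maximiser-extraction lemma: that the Bolzano--Weierstrass limit of approximate maximisers is an exact maximiser. This is where upper semi-continuity is used essentially, and where I must check that Bolzano--Weierstrass is available in $\ACAo$ and can be applied uniformly in $n$ through $(\exists^{2})$. A secondary technical point is verifying that the encoded functions are \usco{} (and, for $\FC$, regulated) \emph{globally}, in particular across the integer boundaries, which the interior margins $\frac13$ are designed to handle; and confirming that the comparative supremum hypothesis genuinely supplies the approachability used to see that the sets $C_{n,m}$ are non-empty.
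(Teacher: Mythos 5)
Your proposal is correct and follows essentially the same route as the paper: the same unfolding of the comparative supremum plus $\QFAC^{0,1}$ and a Bolzano--Weierstrass/usco extraction of exact maximisers, the same encoding of a sequence of closed (resp.\ finite) sets as the unit level set of a usco (resp.\ usco and regulated) function placed on the intervals $[n,n+1]$, and the same approximate level sets $C_{n,m}=\{x:f(x)\geq y-\frac{1}{2^{m}}\}$ for the $\CLC$ step. The only (harmless) deviations are cosmetic: your interior margin $\frac13$ makes the usco verification at interval boundaries cleaner than the paper's direct shift, and you note explicitly that the $\CLC$ step recovers the full item (ii), not just its regulated restriction.
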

\begin{proof}
For the first downward implication, if the supremum $y$ is given, we have $(\forall n,k\in \N)(\exists x\in [n, n+1])( f(x)>y-\frac{1}{2^{k}}  )$, and applying $\QFAC^{0,1}$ yields a sequence $(x_{n, k})_{n,k\in \N}$.  
Since $(\exists^{2})\di \ACA_{0}$, the latter has a convergent sub-sequence (for fixed $n\in \N$), with limit say $y_{n}\in [n, n+1]$ by sequential completeness.  One readily verifies that $f(y_{n})=y$ for any $n\in \N$ as $f$ is usco.
For the second implication, fix a sequence $(C_{n})_{n\in \N}$ of closed sets and define $h:[0,1]\di \R$ as follows using Feferman's $\mu$:
\be\label{mopi}
h(x):=
\begin{cases}
1 &  x-n\in C_{n} \wedge n>0 \\
0 & \textup{ otherwise}
\end{cases}.
\ee
Since $h$ is essentially the characteristic function of closed sets, $h$ is usco on $[n, n+1]$ by definition, for each $n\in \N$. 
The sequence provided by item \eqref{zokp} then clearly satisfies $x_{n}\in C_{n}$. 
To show that $\CLC$ implies item \eqref{zokp}, let $f:[0,1]\di \R$ and $ y\in \R$ be as in the latter and define $C_{n,k}=\{x\in [n, n+1]: f(x)\geq y -\frac{1}{2^{k}}\}$ which is non-empty by definition and closed as $f$ is usco.  The sequence provided by $\CLC$ yields $x_{n}\in [n, n+1] $ such that $f(x_{n})=y$.  
The function $h$ from \eqref{mopi} is also regulated in case each $C_{n}$ is finite, i.e.\ the fourth implication also follows.  
\end{proof}
We note that item \eqref{zokp} is equivalent to e.g.\ the sequential version of the Cantor intersection theorem (\cite{dagsamXVI}).

\smallskip

Thirdly, $\CLC$ from Theorem \ref{konk5} is provable in $\WKL_{0}$ if assume that the closed sets are given by a sequence of RM-codes (see \cite{simpson2}*{IV.1.8}).
We next study $\CLC$ for an alternative representation of closed sets from \cites{browner, brownphd, browner2} as follows.
\bdefi\label{faalanx}
A \(code for a\) \emph{separably closed} set is
a sequence $S=(x_{n})_{n\in \N}$ of reals.  We write `$x\in \overline{S}$' in case $(\forall k\in \N)(\exists n\in \N)(|x-x_{n}|<\frac{1}{2^{k}}$.  
A \(code for a\) separably open set is a code for the \(separably closed\) complement.  
\edefi
Next, item \eqref{Fok} in Theorem \ref{dek} is a weakening of \cite{simpson2}*{V.4.10}, which in turn is a second-order version of the countable union theorem.
In each case, the antecedent only expresses that for every $n$, there \emph{exists} an enumeration of $A_{n}$; abusing notation\footnote{In particular, the formula `$X\in \overline{A_{n}}$' in Theorem \ref{dek} is short-hand for 
\[
(\exists (Y_{m})_{m\in \N})\big[  (\forall Y\subset \N)( Y\in A_{n}\di (\exists m\in \N)(Y_{m}= Y ) )\wedge (\forall k\in \N)(\exists l\in \N)( \overline{X}k=\overline{Y_{l}}k \wedge Y_{l}\in A_{n})  \big],
\]
which is slightly more unwieldy.} slightly, we still write `$X\in \overline{A_{n}}$' as in Definition \ref{faalanx}, leaving the enumeration of $A_{n}$ implicit.  
We sometimes identify subsets $X\subset \N$ and elements $f\in 2^{\N}$.  
\begin{thm}[$\ACA_{0}$]\label{dek}
The following items are intermediate between $\SAC$ and $\WSAC$.
\begin{enumerate}
\renewcommand{\theenumi}{\roman{enumi}}
\item Let $(A_{n})_{n\in \N}$ be a sequence of analytic codes such that each $A_{n}$ is enumerable and non-empty.  
There is a sequence $(X_{n})_{n\in \N}$ with $(\forall n\in \N)(X_{n}\in \overline{A_{n}})$.\label{Fok}
\item Let $(A_{n})_{n\in \N}$ be a sequence of analytic codes such that $A_{n}$ is enumerable and $\overline{A_{n}}$ has positive measure.  
There exists $(X_{n})_{n\in \N}$ with $(\forall n\in \N)(X_{n}\in \overline{A_{n}})$.\label{Fok3}
\item Let $(A_{n})_{n\in \N}$ be a sequence of analytic codes such that $A_{n}$ is enumerable and $\overline{A_{n}}$ is not enumerable.  
There exists $(X_{n})_{n\in \N}$ with $(\forall n\in \N)(X_{n}\in \overline{A_{n}})$.\label{Fok4}
\item Let $(A_{n})_{n\in \N}$ be a sequence of analytic codes such that for all $n\in \N$, $A_{n}$ is RM-open.  
There exists $(X_{n})_{n\in \N}$ with $(\forall n\in \N)(X_{n}\in {A_{n}})$.\label{Fok2}
\end{enumerate}
\end{thm}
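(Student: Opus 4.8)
The plan is to establish, for each of the four items, the two containments $\SAC\to(\cdot)$ and $(\cdot)\to\WSAC$, leaning throughout on two elementary facts: an analytic code for $A_n$ makes the membership predicate `$X\in A_n$' a $\Sigma_{1}^{1}$-formula uniformly in $n$, and, directly from Definition~\ref{faalanx}, one always has $A_{n}\subseteq\overline{A_{n}}$ (any $x\in A_n$ is matched on every finite prefix by the constant enumeration $x,x,\dots$). These two facts reduce the upper bounds to a single application of choice.

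For the upper bound I would treat all four items at once. In each case the set $A_n$ is non-empty: for items \eqref{Fok3} and \eqref{Fok4} this follows because $\overline{A_n}$ has positive measure, resp.\ is not enumerable, whereas an empty set has empty closure (which is both null and enumerable); for \eqref{Fok} and \eqref{Fok2} non-emptiness is part of the hypothesis. Hence $(\forall n)(\exists X)(X\in A_n)$ holds with a $\Sigma_{1}^{1}$ matrix, and one application of $\SAC$ yields $(X_n)_{n\in\N}$ with $X_n\in A_n$. For \eqref{Fok}--\eqref{Fok4} we finish via $A_n\subseteq\overline{A_n}$, and for \eqref{Fok2} the conclusion $X_n\in A_n$ is already what is required.

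For the lower bounds, fix arithmetical $\varphi$ with $(\forall n)(\exists! X\subset\N)\varphi(n,X)$ and let $X^{*}_{n}$ denote the unique witness; the task is to assemble the $\WSAC$-sequence $(X^{*}_{n})_{n\in\N}$. Item \eqref{Fok} is immediate: the set $A_{n}:=\{X:\varphi(n,X)\}$ carries an analytic (indeed arithmetical) code uniform in $n$, is the non-empty singleton $\{X^{*}_{n}\}$, and is enumerable by the constant sequence $X^{*}_{n},X^{*}_{n},\dots$, which exists non-uniformly. A singleton is closed, so $\overline{A_{n}}=\{X^{*}_{n}\}$, and any $X_{n}\in\overline{A_{n}}$ delivered by \eqref{Fok} equals $X^{*}_{n}$, giving $\WSAC$.

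The main obstacle is that this singleton encoding breaks for items \eqref{Fok3} and \eqref{Fok4}, since a singleton is null and enumerable and thus violates the hypotheses `$\overline{A_{n}}$ has positive measure' and `$\overline{A_{n}}$ is not enumerable'. My remedy is to recover $X^{*}_{n}$ \emph{one bit at a time}. Re-indexing by pairs, for each $(n,i)$ put $b:=X^{*}_{n}(i)\in\{0,1\}$ and consider the clopen half $H_{n,i}:=\{Z\in 2^{\N}:Z(0)=b\}$, which has measure $\tfrac12$ and, being homeomorphic to $2^{\N}$, is not enumerable by Cantor's diagonal argument (available in $\ACA_{0}$, keeping the first bit fixed to $b$). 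For \eqref{Fok3} and \eqref{Fok4} I would take $A_{\langle n,i\rangle}$ to be the enumerable dense set of eventually-zero members of $H_{n,i}$, coded uniformly by `$(\exists X)(\varphi(n,X)\wedge Z(0)=X(i))\wedge(\exists N)(\forall m\geq N)(Z(m)=0)$'; its closure is exactly $H_{n,i}$. The item then provides $Z_{\langle n,i\rangle}\in\overline{A_{\langle n,i\rangle}}=H_{n,i}$, so $Z_{\langle n,i\rangle}(0)=X^{*}_{n}(i)$, and setting $X_{n}(i):=Z_{\langle n,i\rangle}(0)$ (via arithmetical comprehension) reconstructs $X^{*}_{n}$, yielding $\WSAC$. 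For \eqref{Fok2} the same bit-encoding is even cleaner: the clopen, hence RM-open, set $A_{\langle n,i\rangle}:=H_{n,i}$, coded analytically by `$(\exists X)(\varphi(n,X)\wedge Z(0)=X(i))$', is non-empty, and the supplied $Z_{\langle n,i\rangle}\in A_{\langle n,i\rangle}$ again reads off the bit $X^{*}_{n}(i)$. The point that keeps these reductions non-circular is that every code merely quotes $\varphi$ and is therefore opaque about $X^{*}_{n}$, so the witnessing sequence genuinely cannot be formed without the choice that the item supplies.
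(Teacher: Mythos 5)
Your proposal is correct and follows essentially the same route as the paper: the upper bound is one application of $\SAC$ to the $\Sigma_{1}^{1}$ membership formula of the analytic codes, item (i) yields $\WSAC$ via the singleton coding of the unique witness, and items (ii)--(iv) yield $\WSAC$ by the same witness-recovery trick, the only (cosmetic) difference being that you read off the witness one bit at a time from clopen halves of measure $\frac12$, whereas the paper recovers length-$k$ prefixes from basic balls and their eventually-zero dense subsets. One small imprecision: the inclusion $A_{n}\subseteq\overline{A_{n}}$ should be justified by the enumeration of \emph{all} of $A_{n}$ supplied by the hypothesis (in which any $X\in A_{n}$ occurs), not by the constant sequence $X,X,\dots$, since the definition of $\overline{A_{n}}$ requires the witnessing sequence to enumerate the whole set.
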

\begin{proof}
To prove the items in $\SAC$, apply the latter to $(\forall n\in \N)(\exists X\subset \N)[X\in A_{n}]  $, noting that the formula in square brackets is $\Sigma_{1}^{1}$ if $A_{n}$ is an analytic code.  
To derive $\WSAC$ from item \eqref{Fok}, let $\varphi$ be arithmetical and such that $(\forall n\in \N)(\exists! X\subset \N )\varphi(X, n)$ and define `$X\in A_{n}$' as $\varphi(X, n)$ using \cite{simpson2}*{V.1.7$'$}.    Clearly, $X\in \overline{A_{n}}$ then implies $\varphi(X, n)$ as $A_{n}$ codes a singleton, i.e.\ item \eqref{Fok} implies $\WSAC$.
To obtain $\WSAC$ from item \eqref{Fok2}, let $\varphi$ be as in the antecedent of the former and consider $\Psi(X, n, k)$ defined as
\be\label{piesje}
(\exists Y\subset \N)[\varphi(Y,n )\wedge \overline{Y}k=\overline{X}k],
\ee
which yields a sequence of $\Sigma_{1}^{1}$-formulas, yielding in turn a sequence of analytic codes $(A_{n, k})_{n, k\in \N}$ by \cite{simpson2}*{V.1.7$'$}. 
In light of \eqref{piesje}, $A_{n, k}$ is a basic open ball in $2^{\N}$.  
In case $X_{n, k}\in A_{n, k}$ for all $n, k\in \N$, define $Y_{n}:= \lambda k. \overline{X_{n,k}}k$ and note that $\varphi(Y_{n}, n)$ for all $n\in \N$.
To obtain $\WSAC$ from item \eqref{Fok3}, let $\varphi$ be as in the antecedent of the former and define $\Psi(X, n, k)$ as
\[
(\exists Y\subset \N)[\varphi(\overline{X}k*Y,n )\wedge (\exists \sigma \in 2^{<\N})(X=\sigma *00\dots )],
\]
which yields a sequence of $\Sigma_{1}^{1}$-formulas, yielding in turn a sequence of analytic codes $(A_{n, k})_{n, k\in \N}$ by \cite{simpson2}*{V.1.7$'$}. 
For fixed $n_{0}\in \N$, there is a unique $X_{0}\subset \N$ such that $\varphi(X_{0}, n_{0})$, immediately yielding an enumeration of $A_{n_{0}, k}$ for any $k\in \N$.
Essentially by definition, $\overline{A_{n,k}}$ has measure $1/2^{k}$.  
In case $X_{n, k}\in \overline{A_{n, k}}$ for all $n, k\in \N$, define $Y_{n}:= \lambda k. \overline{X_{n,k}}k$ and note that $\varphi(Y_{n}, n)$ for all $n\in \N$.
Item \eqref{Fok4} also follows as enumerable sets have measure zero. 
\end{proof}
We would like to formulate item \eqref{Fok} using Borel codes from \cite{simpson2}*{V.3}, but the latter seem to need $\ATR_{0}$ to express basic aspects.  
The items from the theorem also imply $\FSAC$, which is left as an exercise.  

\smallskip

Finally, we formulate a higher-order result for comparison; we continue the abuse of notation involving $\overline{S_{n}}$ as in Theorem \ref{konk5}. 
\begin{thm}[$\ACAo$]\label{konk6}
The higher items imply the lower ones.  
\begin{enumerate}
\renewcommand{\theenumi}{\roman{enumi}}
\item The principle $\QFAC^{0,1}$.
\item Let $(S_{n})_{n\in \N}$ be a sequence of sets in $[0,1]$ such that for all $n\in \N$, $S_{n}$ is enumerable and non-empty.  There is $(x_{n})_{n\in \N}$ with $(\forall n\in \N)(x_n\in \overline{S_{n}})$.\label{zitem666}
\item Let $(S_{n})_{n\in \N}$ be a sequence of sets in $[0,1]$ such that for all $n\in \N$, $S_{n}$ is enumerable and $\overline{S_{n}}$ has positive measure.  There is $(x_{n})_{n\in \N}$ with $(\forall n\in \N)(x_n\in \overline{S_{n}})$.\label{zitem6}
\item $\cocode_{1}$
\end{enumerate}
\end{thm}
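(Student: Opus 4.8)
The plan is to establish the three consecutive implications (i)$\to$(ii), (ii)$\to$(iii), and (iii)$\to$(iv); transitivity then gives that every higher item implies every lower one. The first two are routine and the third carries all the weight. For (i)$\to$(ii) I would use the now-familiar $\QFAC^{0,1}$ argument, as in the first implication of Theorem~\ref{konk5}. Writing the sequence $(S_n)$ as a single functional $G\colon\N\times\R\di\{0,1\}$ with $G(n,\cdot)=F_{S_n}$, non-emptiness reads $(\forall n)(\exists x\in[0,1])(G(n,x)=1)$; using $(\exists^2)$ to decode reals from Baire space this is exactly of the shape required by $\QFAC^{0,1}$, which yields $(x_n)$ with $x_n\in S_n$. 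Since any enumeration witnessing that $S_n$ is enumerable also lists $x_n$, we have $x_n\in S_n\subseteq\overline{S_n}$ in the sense of Definition~\ref{faalanx}, as desired. The implication (ii)$\to$(iii) is immediate: if $\overline{S_n}$ has positive measure then $\overline{S_n}\neq\emptyset$, hence $S_n\neq\emptyset$, so every instance of item~\eqref{zitem6} is already an instance of item~\eqref{zitem666}.

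The substance is (iii)$\to$(iv). Let $A\subseteq[0,1]$ be strongly countable, witnessed by $Y\colon\R\di\N$ injective and surjective on $A$, and for each $m$ let $a_m$ denote the unique element of $A$ with $Y(a_m)=m$. It is worth recording why this is genuinely harder than the trivial (ii)$\to$(iv): the singleton $\{a_m\}=\{x:F_A(x)=1\wedge Y(x)=m\}$ has an $\ACAo$-definable characteristic function and is enumerable, so item~\eqref{zitem666} applied to $(\{a_m\})_m$ returns the $a_m$ outright; but $\overline{\{a_m\}}=\{a_m\}$ has measure zero, and it is precisely the positive-measure hypothesis of item~\eqref{zitem6} that forbids this shortcut. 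Instead, for each pair $(m,k)$ I would take $S_{m,k}$ to be the set of dyadic rationals lying in the dyadic block of length $2^{-k}$ containing $a_m$ (equivalently, whose first $k$ binary digits agree with those of $a_m$). Then $S_{m,k}$ is countable, hence enumerable, and $\overline{S_{m,k}}$ is that dyadic block, of measure $2^{-k}>0$.

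Feeding the reindexed sequence $(S_{m,k})_{m,k}$ into item~\eqref{zitem6} produces points $x_{m,k}\in\overline{S_{m,k}}$, so that $|x_{m,k}-a_m|\leq 2^{-k}$. For fixed $m$ the sequence $(x_{m,k})_k$ is therefore Cauchy with the explicit modulus $2^{-k}$, and $\ACA_0$ forms its limit $a_m=\lim_k x_{m,k}$. The resulting sequence $(a_m)_m$ lists every element of $A$, since each $a\in A$ equals $a_{Y(a)}$; this is exactly the enumeration demanded by $\cocode_1$. The finitely-ambiguous case of dyadic $a_m$ (a null set) is absorbed routinely by replacing the block with $[a_m-2^{-k},a_m+2^{-k}]\cap[0,1]$.

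The main obstacle is \emph{presenting} the sets $S_{m,k}$ within $\ACAo$. Deciding membership in $S_{m,k}$ asks whether the dyadic block cut out by $x\upharpoonright k$ contains $a_m$, i.e.\ whether $(\exists y)(F_A(y)=1\wedge Y(y)=m\wedge y\upharpoonright k=x\upharpoonright k)$; this is a genuine quantifier over reals, so $S_{m,k}$ has no $\ACAo$-definable characteristic function---indeed, computing $a_m\upharpoonright k$ uniformly in $(m,k)$ would already enumerate $A$, so the circularity is unavoidable and is exactly what item~\eqref{zitem6} is meant to break. I expect this to be handled as in Theorem~\ref{dek}: the $S_{m,k}$ are specified through the implicit-enumeration reading of Definition~\ref{faalanx} and the footnote to Theorem~\ref{dek}, so that ``$x\in\overline{S_{m,k}}$'' unfolds to the existence of a suitable enumeration, and only the enumerability of $S_{m,k}$ and the positivity of the measure of $\overline{S_{m,k}}$---both true and provable from the data $(F_A,Y)$---are needed to invoke item~\eqref{zitem6}. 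Making precise that such an implicitly-presented sequence is a legitimate input to item~\eqref{zitem6}, rather than an honest third-order sequence of characteristic functions, is the delicate point of the argument.
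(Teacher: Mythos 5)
Your architecture (three consecutive implications) and the first two steps match the paper: (i)$\to$(ii) is the usual $\QFAC^{0,1}$ application to non-emptiness, and (ii)$\to$(iii) is the trivial weakening. Your high-level plan for (iii)$\to$(iv) -- surround each $a_m$ with an enumerable set of positive-measure closure at scale $2^{-k}$, extract $x_{m,k}$ from item \eqref{zitem6}, and recover $a_m=\lim_k x_{m,k}$ -- is also exactly the paper's plan. But the concrete construction you propose does not go through, and you have in fact put your finger on precisely why: membership of a dyadic rational $d$ in your $S_{m,k}$ requires deciding whether the length-$2^{-k}$ block of $d$ contains $a_m$, i.e.\ a quantifier over reals, so $S_{m,k}$ has no characteristic function definable in $\ACAo$ from $(F_A,Y)$. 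The escape you then suggest -- treating $(S_{m,k})$ as ``implicitly presented'' in the style of Theorem \ref{dek} -- does not work here: the abuse of notation in Definition \ref{faalanx} and the footnote to Theorem \ref{dek} concerns only the closure operator $\overline{(\cdot)}$ (an existentially quantified enumeration), whereas the sets $S_n$ themselves are, per Definition \ref{char}, genuine third-order objects given by characteristic functions, and item \eqref{zitem6} quantifies over sequences of such objects. Without an actual characteristic function for each $S_{m,k}$ you have no legal input to item \eqref{zitem6}, so the reduction is circular exactly as you feared.

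The missing idea is the paper's rational-translation trick. Instead of ``dyadic rationals in the block containing $a_m$'', the paper takes (in your notation, suppressing its reindexing into $[n,n+1]$)
\[
\textstyle E_{m,k}:=\{x : (\exists q\in\Q)\,(|q|\leq \frac{1}{2^{k+1}} \wedge x+q\in A\wedge Y(x+q)=m)\},
\]
i.e.\ the set of rational translates of $a_m$ by at most $2^{-(k+1)}$. Membership is now an \emph{arithmetical} formula in the parameters $F_A$ and $Y$ (the existential quantifier ranges over $\Q$, hence over $\N$), so $(\exists^2)$ yields an honest sequence of characteristic functions; the set is enumerable because the enumeration $(a_m+q)_{q\in\Q,\,|q|\leq 2^{-(k+1)}}$ exists (even though it cannot be computed); and its closure is the interval $[a_m-2^{-(k+1)},a_m+2^{-(k+1)}]$ of measure $2^{-k}>0$. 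From there your limit argument applies verbatim and yields the enumeration $(a_m)_{m\in\N}$ required by $\cocode_1$.
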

\begin{proof}
The first downward implication follows by applying $\QFAC^{0,1}$ to `$S_{n}$ is non-empty for all $n\in \N$'.
For the third downward implication, let $Y:[0,1]\di \R $ and $ A\subset [0,1]$ be such that $(\forall n\in \N)(\exists! x\in A)(Y(x)=n)$.  Define the set
\[\textstyle
E_{n,k}:=\{  x\in [n, n+1]:  (\exists q\in \Q)(Y(x-n+q)=n\wedge x-n+q\in A \wedge |q|\leq \frac{1}{2^{k+1}}) \} 
\]
and note that this sequence has a straightforward enumeration while the associated separably closed set has measure $\frac{1}{2^{k}}$.  
Let $(x_{n, k})_{n, k\in \N}$ be the sequence provided by item \eqref{zitem6}.    Using sequential compactness, $y_{n}=\lim_{k\di \infty}x_{n,k}$ is a real in $[n, n+1]$ satisfying $Y(y_{n})=n$, for any $n\in \N$ as required.
\end{proof}
Variations of the previous theorem are possible, e.g.\ replacing `enumerable' by `(strongly) countable'.
Nonetheless, we are not able to derive e.g.\ $\cocode_{1}$ from  $\CLC$ restricted to closed sets of positive measure, i.e.\ the previous two theorems may well be 
due to the coding of closed sets as in Definition \ref{faalanx}.  

\subsection{Unordered sums}\label{unorder}
The notion of \emph{unordered sum} is a device for bestowing meaning upon sums involving uncountable index sets.  
We introduce the relevant definitions and then prove that basic properties of unordered sums exist in the range of hyperarithmetical analysis.  

\smallskip

First of all, {unordered sums} are essentially `uncountable sums' $\sum_{x\in I}f(x)$ for \emph{any} index set $I$ and $f:I\di \R$.  
A central result is that if $\sum_{x\in I}f(x)$ somehow exists, it must be a `normal' series of the form $\sum_{i\in \N}f(y_{i})$, i.e.\ $f(x)=0$ for all but countably many $x\in [0,1]$; Tao mentions this theorem in \cite{taomes}*{p.~xii}. 

\smallskip

By way of motivation, there is considerable historical and conceptual interest in this topic: Kelley notes in \cite{ooskelly}*{p.\ 64} that E.H.\ Moore's study of unordered sums in \cite{moorelimit2} led to the concept of \emph{net} with his student H.L.\ Smith (\cite{moorsmidje}).
Unordered sums can be found in (self-proclaimed) basic or applied textbooks (\cites{hunterapp,sohrab}) and can be used to develop measure theory (\cite{ooskelly}*{p.\ 79}).  
Moreover, Tukey shows in \cite{tukey1} that topology can be developed using \emph{phalanxes}, which are nets with the same index sets as unordered sums.  

\smallskip

Secondly, as to notations, unordered sums are just a special kind of \emph{net} and $a:[0,1]\di \R$ is therefore written $(a_{x})_{x\in [0,1]} $ in this context to suggest the connection to nets.  
The associated notation $\sum_{x\in [0,1]}a_{x}$ is purely symbolic.   
We only need the following notions in the below. 
Let $\fin(\R)$ be the set of all finite sequences of reals without repetitions.  
\bdefi\label{kaukie} Let $a:[0,1]\di \R$ be any mapping, also denoted $(a_{x})_{x\in [0,1]}$.
\begin{itemize}
\item We say that $(a_{x})_{x\in [0,1]} $ is \emph{convergent to $a\in \R$} if for all $k\in \N$, there is $I\in \fin({\R})$ such that for $J \in \fin({\R})$ with $I\subseteq J$, we have $|a-\sum_{x\in J}a_{x}|<\frac{1}{2^{k}}$.
\item A \emph{modulus of convergence} is any sequence $\Phi^{0\di 1^{*}}$ such that $\Phi(k)=I$ for all $k\in \N$ in the previous item.  
\end{itemize}
\edefi
For simplicity, we focus on \emph{positive unordered sums}, i.e.\ $(a_{x})_{x\in [0,1]}$ such that $a_{x}\geq 0$ for $x\in [0,1]$.

\smallskip

Thirdly, we establish that basic properties of unordered sums exist in the range of hyperarithmetical analysis. 
\begin{thm}[$\ACAo+\IND_{1}$]\label{uot}
The higher items imply the lower ones.
\begin{enumerate}
\renewcommand{\theenumi}{\roman{enumi}}
\item $\QFAC^{0,1}$.
\item For a positive and {convergent} unordered sum $\sum_{x\in [0,1]}a_{x}$, there is a sequence $(y_{n})_{n\in \N}$ of reals such that $a_{y}=0$ for all $y$ not in this sequence.\label{bvv7}
\item For a positive and {convergent} unordered sum $\sum_{x\in [0,1]}a_{x}$, there is a modulus of convergence. 
\item $\cocode_{1}$.
\end{enumerate}
\end{thm}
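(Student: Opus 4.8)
The plan is to prove the three consecutive implications (i)$\Rightarrow$(ii), (ii)$\Rightarrow$(iii), (iii)$\Rightarrow$(iv), whence ``higher implies lower'' follows by transitivity. Throughout I fix the sum $(a_x)_{x\in[0,1]}$ and the real $a$ to which it converges (available as a parameter, since ``convergent'' existentially provides such an $a$), and I use $(\exists^2)$ to evaluate $a$, compare reals, and form finite sums $\sum_{x\in J}a_x$ for $J\in\fin(\R)$. The workhorse is an $\ACAo$-lemma needing \emph{no} choice: for positive convergent sums every finite partial sum obeys $\sum_{x\in J}a_x\le a$. Indeed, were $\sum_{x\in J}a_x>a$ for some finite $J$, pick $n$ with $1/2^{n}<\sum_{x\in J}a_x-a$ and a witness $I_n$ of convergence; then $J\cup I_n\supseteq I_n$ forces $\sum_{x\in J\cup I_n}a_x<a+1/2^{n}$, while positivity gives $\sum_{x\in J\cup I_n}a_x\ge\sum_{x\in J}a_x$, a contradiction. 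I expect the first implication to be the main obstacle: I must feed $\QFAC^{0,1}$ a \emph{quantifier-free} matrix, whereas the naive support conditions (``$I$ is a valid modulus value'', ``$J$ lists every heavy point'') hide a universal quantifier over $\fin(\R)$ and are only $\Pi_1^1$. The fix is to replace them by the one-sided, $(\exists^2)$-decidable condition $\sum_{x\in J}a_x>a-1/2^{n}$ and let the positivity lemma do the rest; keeping $a$ as a given parameter rather than trying to compute it is what makes this legitimate.

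For (i)$\Rightarrow$(ii), for each $n\in\N$ consider the statement that some $f\in\N^{\N}$ codes a finite sequence $J$ of pairwise distinct reals with $\sum_{x\in J}a_x>a-1/2^{n}$. This holds for every $n$ (take $J=I_n$), and the matrix is decidable given $(\exists^2)$ and $a$, so $\QFAC^{0,1}$ yields a sequence $(J_n)_{n\in\N}$. Let $(y_m)_{m\in\N}$ enumerate all reals occurring in any $J_n$, by a single concatenation. If some $y^{\ast}$ with $a_{y^{\ast}}>0$ were missing from $(y_m)$, fix $k$ with $a_{y^{\ast}}>1/2^{k}$ and take any $n>k$; then $J_n$ together with $y^{\ast}$ is a finite sequence of distinct reals whose sum exceeds $(a-1/2^{n})+1/2^{k}>a$, contradicting the partial-sum lemma. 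Hence $(y_m)$ contains the whole support, which is item (ii).

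For (ii)$\Rightarrow$(iii), I first use $(\exists^2)$ to thin $(y_m)$ to a subsequence $(y'_m)$ of pairwise distinct reals, so the series $\sum_m a_{y'_m}$ counts each support point once. Its partial sums are monotone and bounded by $a$ (partial-sum lemma), hence converge in $\ACAo$, and matching against the $I_k$ of the convergence hypothesis shows the limit equals $a$. Feferman's $\mu$ then locates, for each $k$, an index $N_k$ with $\sum_{m>N_k}a_{y'_m}<1/2^{k}$, and I set $\Phi(k):=(y'_0,\dots,y'_{N_k})$. Since the support lies in $(y'_m)$, any finite $J\supseteq\Phi(k)$ has $\sum_{x\in J}a_x$ sandwiched between $\sum_{m\le N_k}a_{y'_m}$ and $a$, giving $|a-\sum_{x\in J}a_x|<1/2^{k}$; thus $\Phi$ is a modulus.

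For (iii)$\Rightarrow$(iv), let $A\subset[0,1]$ be strongly countable via $Y:\R\to\N$ injective and surjective on $A$, and define the positive unordered sum $a_x:=1/2^{Y(x)+1}$ if $x\in A$ and $a_x:=0$ otherwise, using $(\exists^2)$ on $F_A$ and $Y$. Surjectivity of $Y$ on $A$ supplies, for each $k$, the finite sequence $I_k$ of the (unique, by injectivity) preimages of $0,\dots,k$, witnessing convergence to $1$; verifying this uniformly in $k$ is precisely where $\IND_1$ enters, the witnesses being unique. Item (iii) now yields a modulus $\Phi$. For the point $x^{\ast}\in A$ with $Y(x^{\ast})=j$, taking $k=j+2$ shows $x^{\ast}$ must occur in $\Phi(k)$: otherwise $\Phi(k)$ together with $x^{\ast}$ would have sum exceeding $1-1/2^{k}+1/2^{j+1}>1$, contradicting the partial-sum lemma. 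Hence concatenating $\Phi(0),\Phi(1),\dots$ produces a single sequence of reals containing every element of $A$, i.e.\ an enumeration as required by $\cocode_1$.
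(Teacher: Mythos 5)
Your proposal is correct and follows essentially the same route as the paper: the same application of $\QFAC^{0,1}$ to the approximate-partial-sum statement for (i)$\Rightarrow$(ii), and the same unordered sum $a_{x}=\frac{1}{2^{Y(x)+1}}$ (with $\IND_{1}$ used to collect the unique witnesses) to extract $\cocode_{1}$. Your explicit partial-sum lemma and the worked-out (ii)$\Rightarrow$(iii) step merely make precise what the paper uses implicitly or dismisses as "readily seen".
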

\begin{proof}
Assume $\QFAC^{0,1}$ and note that the convergence of an unordered sum to some $a\in \R$ implies
\be\textstyle\label{furlo}
(\forall k\in \N\)(\exists I\in \fin(\R)) \big(|a-\sum_{x\in I}a_{x}|<\frac{1}{2^{k}}\big).
\ee
Apply $\QFAC^{0,1}$ to \eqref{furlo} to obtain a sequence $(I_{n})_{n\in \N}$ of finite sequences of reals.
This sequence must contain all $y\in \R$ such that $a_{y}\ne0$.  Indeed, suppose $y_{0}\in \R$ satisfies $a_{y_{0}}>_{\R}\frac{1}{2^{k_{0}}}$ for fixed $k_{0}\in \N$ and $y_{0}$ is not included in $(I_{n})_{n\in \N}$.  By definition, $I_{k_{0}+2}$ satisfies $|a-\sum_{x\in I_{k_{0}+2}}a_{x}|<\frac{1}{2^{k_{0}+2}}$.    However, for $J=I_{k_{0}+2}\cup\{y_{0}\}$, we have $a_{J}>a$, a contradiction. 
Hence, $\QFAC^{0,1}$ implies item \eqref{bvv7}.
The second and third items are readily seen to be equivalent.  

\smallskip

For the final downward application, let $A\subset [0,1]$ and $Y:[0,1]\di \R$ be such that the latter is injective and surjective on the former. 
Define $a_{x}:=\frac{1}{2^{Y(x)+1}}$ if $x\in A$, and $0$ otherwise.  One readily proves that $\sum_{x\in [0,1]}a_{x}$ is convergent to $1$, for which $\IND_{1}$ is needed.
The sequence from the second item now yields the enumeration of the set $A$ required by $\cocode_{1}$. 
\end{proof}
We note that $\WFSAC$ can be obtained from item \eqref{bvv7} in Theorem~\ref{uot}; we conjecture that $\FSAC$ cannot be obtained. 
Since unordered sums are just nets, one could study statements like
\begin{center}
\emph{a convergent net has a convergent sub-sequence},
\end{center}
which for index sets defined over Baire space is equivalent to $\QFAC^{0,1}$ (\cite{samhabil}).

\subsection{Variations and generalisations}\label{remvar}
We discuss variations and generalisations of the above results.  

\smallskip

First of all, many variations of the results in Section \ref{jefdoetbef} exist for \emph{rectifiable} functions.   
Now, Jordan proves in \cite{jordel3}*{\S105} that $BV$-functions are exactly those for which the notion of `length of the graph of the function' makes sense.  In particular, $f\in BV$ if and only if the `length of the graph of $f$', defined as follows:
\be\label{puhe}\textstyle
L(f, [0,1]):=\sup_{0=t_{0}<t_{1}<\dots <t_{m}=1} \sum_{i=0}^{m-1} \sqrt{(t_{i}-t_{i+1})^{2}+(f(t_{i})-f(t_{i+1}))^{2}  }
\ee
exists and is finite by \cite{voordedorst}*{Thm.\ 3.28.(c)}.  In case the supremum in \eqref{puhe} exists (and is finite), $f$ is also called \emph{rectifiable}.  
Rectifiable curves predate $BV$-functions: in \cite{scheeffer}*{\S1-2}, it is claimed that \eqref{puhe} is essentially equivalent to Duhamel's 1866 approach from \cite{duhamel}*{Ch.\ VI}.  Around 1833, Dirksen, the PhD supervisor of Jacobi and Heine, already provides a definition of arc length that is (very) similar to \eqref{puhe} (see \cite{dirksen}*{\S2, p.\ 128}), but with some conceptual problems as discussed in \cite{coolitman}*{\S3}.

\smallskip

Secondly, regulated functions are not necessarily $BV$ but have \emph{bounded} Waterman variation $W_{0}^{1}(f)$ (see \cite{voordedorst}), which is a generalisation of $BV$ where the sum in \eqref{tomb} 
is weighted by a \emph{Waterman sequence}, which is a sequence of positive reals that converges to zero and with a divergent series.  
Some of the above results generalise to regulated function for which the Waterman variation is known, say $W_{0}^{1}(f)=1$.

\smallskip

Thirdly, one can replace the consequent of item \eqref{fling2} in Theorem \ref{xion} by a number of similar conditions, like the existence of a Baire 1 representation (which can be defined in $\ACAo$ for monotone functions), the fundamental theorem of calculus at all reals but a given sequence, or the condition that if the Riemann integral of $f:[0,1]\di [0,1]$ in $BV$ is zero, $f(x)=0$ for all $x\in [0,1]$ but a given sequence.  Many similar conditions may be found in \cites{samBIG, samBIG3, samBIG4}.  

\smallskip

Fourth, Theorem \ref{dinggong} is readily generalised to (almost) arbitrary functions on the reals.  
To make sure the resulting theorem is provable in $\ACAo+\QFAC^{0,1}$, it seems we need \emph{oscillation functions}\footnote{For any $f:\R\di \R$, the associated \emph{oscillation functions} are defined as follows: $\osc_{f}([a,b]):= \sup _{{x\in [a,b]}}f(x)-\inf _{{x\in [a,b]}}f(x)$ and $\osc_{f}(x):=\lim _{k \di \infty }\osc_{f}(B(x, \frac{1}{2^{k}}) ).$}. Riemann, Ascoli, and Hankel already considered the notion of oscillation in the study of Riemann integration (\cites{hankelwoot, rieal, ascoli1}), i.e.\ there is ample historical precedent.   In the same way as for Theorem \ref{dinggong}, one proves that the higher items imply the lower ones over $\ACAo$.
\begin{itemize}
\item The principle $\QFAC^{0,1}$.
\item Any infinite set $X\subset [0,1]$ has a limit point. 
\item For any $f:[0,1]\di \R$ with oscillation function $\osc_{f}:[0,1]\di\R$, the set $D_{f}=\{x\in [0,1]:\osc_{f}(x)>0\}$ is either finite or has a limit point. 
\item For a non-piecewise continuous $f:[0,1]\di \R$ with oscillation function $\osc_{f}:[0,1]\di\R$, the set $D_{f}=\{x\in [0,1]:\osc_{f}(x)>0\}$ has a limit point. 
\item The arithmetical Bolzano-Weierstrass theorem $\ABW_{0}$ (\cite{coniving}). 
\end{itemize}
We note that $\osc_{f}:[0,1]\di \R$ is necessary to make `$x\in D_{f}$' into an \emph{arithmetical} formula while `$x$ is a limit point of $D_{f}$' is a meaningful (non-arithmetical) formula even if $D_{f}$ does not exist as a set. 

\begin{ack}\rm 
Our research was supported by the \emph{Klaus Tschira Boost Fund} via the grant Projekt KT43.
The initial ideas for this paper, esp.\ Section~\ref{disorder}, were developed in my 2022 Habilitation thesis at TU Darmstadt (\cite{samhabil}) under the guidance of Ulrich Kohlenbach.  
The main ideas of this paper came to the fore during the \emph{Trends in Proof Theory} workshop in February 2024 at TU Vienna.  
We express our gratitude towards all above persons and institutions.   
\end{ack}

\begin{bibdiv}
\begin{biblist}
\bibselect{allkeida}
\end{biblist}
\end{bibdiv}
\bye